\newtheorem{theorem}{Theorem}[section]
\newtheorem{proposition}[theorem]{Proposition}
\newtheorem{dfn}{Definition}[section]
\newtheorem{rem}[dfn]{Remark}
\def\Z{\ensuremath{\mathbb Z}}
\def\C{\ensuremath{\mathbb C}}
\def\P{\ensuremath{\mathbb P}}
\def\R{\ensuremath{\mathbb R}}
\begin{document}
\baselineskip=15pt

\title[Polygons in Minkowski three space and rank two parabolic
bundles]{Polygons in Minkowski three space and parabolic
Higgs bundles of rank two on ${\mathbb C}{\mathbb P}^1$}

\author[I. Biswas]{Indranil Biswas}

\address{School of Mathematics, Tata Institute of Fundamental
Research, Homi Bhabha Road, Bombay 400005, India}

\email{indranil@math.tifr.res.in}

\author[C. Florentino]{Carlos Florentino}

\address{Departamento Matem\'atica, Centro de An\'alise Matem\'atica, Geometria e Sistemas din\^amicos -- LARSYS, Instituto Superior
T\'ecnico, Av. Rovisco Pais, 1049-001 Lisbon, Portugal}

\email{cfloren@math.ist.utl.pt}

\author[L. Godinho]{Leonor Godinho}

\address{Departamento Matem\'atica, Centro de An\'alise Matem\'atica, Geometria e Sistemas din\^amicos -- LARSYS, Instituto Superior
T\'ecnico, Av. Rovisco Pais, 1049-001 Lisbon, Portugal}

\email{lgodin@math.ist.utl.pt}

\author[A. Mandini]{Alessia Mandini}

\address{Departamento Matem\'atica, Centro de An\'alise Matem\'atica, Geometria e Sistemas din\^amicos -- LARSYS, Instituto Superior
T\'ecnico, Av. Rovisco Pais, 1049-001 Lisbon, Portugal}

\email{amandini@math.ist.utl.pt}

\subjclass[2000]{14D20, 14H60, 53C26, 53D20}

\keywords{Polygons, Minkowski space, parabolic bundle, Higgs field, hyperpolygons}
\thanks{Partially supported by FCT projects PTDC/MAT/108921/2008, PTDC/MAT/099275/2008, PTDC/MAT/120411/2010 and FCT grant SFRH/BPD/44041/2008}
\date{}

\begin{abstract}
Consider the moduli space of parabolic Higgs bundles $(E,\Phi)$ of rank two on $\C\P^1$ such that the underlying holomorphic vector bundle
for the parabolic vector bundle $E$ is trivial. It is equipped with the natural involution
defined by  $(E,\Phi)\longmapsto (E,-\Phi)$. We study the fixed point locus of this involution. In \cite{GM},  this moduli space with involution was identified with the moduli space of
hyperpolygons equipped with a certain natural involution. Here we identify the fixed point locus with the moduli spaces of polygons in Minkowski $3$-space. This identification
yields  information on the connected components of the fixed point locus.
\end{abstract}

\maketitle

\section{Introduction}

Parabolic vector bundles over a compact Riemann surface $\Sigma$ with $n$ 
marked points are holomorphic
vector bundles over $\Sigma$ with a weighted flag structure over each 
of the marked 
points. They were introduced by Seshadri, \cite{S}, and are of interest for 
many 
reasons. For instance, there is a natural bijective correspondence between 
the isomorphism classes of polystable parabolic bundles of parabolic degree 
zero and the equivalence classes of unitary representations of the fundamental 
group of the $n$-punctured surface.

Parabolic Higgs bundles are pairs of the form $(E,\Phi)$, where $E$ is a 
parabolic vector bundle on $\Sigma$
and $\Phi$ is a meromorphic $End(E)$-valued $1$-form 
holomorphic outside the $n$ marked points such that $\Phi$
has at most a simple pole with nilpotent residue (with respect to the flag)
at each of the marked points. 
There is a natural relationship between the polystable parabolic Higgs bundles 
of parabolic degree zero and the representations 
of the fundamental group of 
the $n$-punctured surface in the general linear groups \cite{Si}. Parabolic 
Higgs bundles have been studied in other works such as \cite{BY,N1,K1,GM}.

We will be particularly interested in the case of parabolic Higgs 
bundles of rank two over a $n$-pointed Riemann surface of genus zero.

Consider the split real form ${\rm PGL}(2,\R)$ of ${\rm PGL}(2,\C)$ defined by the
involution $A\, \longmapsto\, \overline{A}$. It produces 
the anti-holomorphic involution on the moduli space of representations
corresponding to the holomorphic involution
\begin{equation}\label{eq:0.1}
\sigma:(E\, ,\Phi) \,\longmapsto\, (E\, ,-\Phi)
\end{equation}
of the moduli space of parabolic Higgs bundles \cite{Hi1}.
Note that $\sigma$ is the restriction to $-1$ of the $\text{U}(1)= S^1$-action 
on the moduli space of parabolic Higgs bundles defined by
$$
\lambda\cdot (E, \Phi)\, =\, (E,\lambda \Phi), \quad \lambda \,\in\, 
S^1\, .
$$
The isomorphism classes of stable parabolic Higgs bundles fixed by this 
involution correspond to ${\rm SU}(2)$ or ${\rm SL}(2,\R)$ representations, the 
former corresponding to parabolic Higgs bundles with zero Higgs field;
see \cite{Hi1}.

We study the fixed points in the special case where the 
underlying vector bundle is holomorphically trivial.
Let $\mathcal{H}(\beta)$ be the moduli space of 
parabolic Higgs bundles $(E,\Phi)$, where $E$ is a
holomorphically trivial vector bundle over $\C 
\P^1$ of rank two with a weighted complete flag structure over each of
the $n$
marked points $x_1,\cdots,x_n$ 
\begin{align*}
E_{x_i,1}\,\supsetneq\, E_{x_i,2} \,\supsetneq\, 0\, , \\
0 \,\leq\, \beta_1(x_i) \,< \,\beta_2(x_i) \,<\,1\, .
\end{align*}
As shown in \cite{GM},
there is an isomorphism between $\mathcal{H}(\beta)$
and the hyperpolygon space $X(\alpha)$, with 
$\alpha_i\,=\,\beta_2(x_i)-\beta_1(x_i)$, defined as a hyper-K\"{a}hler 
quotient 
of $T^* \C^{2n}$ by 
$$K \,:= \,\Big({\rm U}(2) \times {\rm U}
(1)^n\Big)/{\rm U}(1)\,=\, \Big({\rm SU}(2) \times 
{\rm U}(1)^n\Big)/ (\Z/2\Z)\, ,$$
where $\Z/2\Z$ acts by multiplication of each factor by $-1$.
(See also Sections~\ref{hyperpolygons} and \ref{PHBs} for details.)

Using this correspondence between the two moduli spaces, we study in 
Section~\ref{sec:inv} the fixed point set of the corresponding involution
of $X(\alpha)$ defined by
\begin{equation}\label{eq:0.2}
\sigma\,:\,[p,q] \,\longmapsto\, [-p,q]\, ,
\end{equation}
with $(p,q)\in T^* \C^{2n}$. We show that this fixed-point set is formed
by $M(\alpha)$, the space of polygons in 
$\R^3$ obtained when $p=0$, and several other connected components
$Z_S$, where $S$ runs over all subsets of $\{1\, , \cdots \, ,n\}$ with
$\lvert S\rvert \geq 2$ and
\begin{equation}\label{eq:0short}
\sum_{i\in S} \alpha_i \,<\, \sum_{i\in S^c} \alpha_i
\end{equation}
(the complement of $S$ is denoted by $S^c$).
These components $Z_S$ are all non-compact except when $\lvert S \rvert \,=\,n-1$, in 
which case $Z_S\,=\,\C \P^{n-2}$ and $M(\alpha)$ is empty. Let $\mathcal{S}'(\alpha)$ be the collection of all subsets of $\{1\, , \cdots \, ,n\}$ with
$\lvert S\rvert \geq 2$ satisfying \eqref{eq:0short}.

We describe these sets $Z_S$ and the corresponding
components ${\mathcal Z}_S$ of the fixed point set of the involution of
${\mathcal H}(\beta)$ defined in \eqref{eq:0.1}; the following
theorem is proved (see Section~\ref{sec:inv}).

\begin{theorem}\label{thm:0.1}
The fixed-point set of the involution in \eqref{eq:0.1} of the space of 
parabolic Higgs bundles $\mathcal{H}(\beta)$ is
$$ \mathcal{H}(\beta)^{\Z/2\Z}\,=\, \mathcal{M}_{\beta,2,0} \cup \bigcup_{S \in 
\mathcal S'(\alpha)} \mathcal{Z}_S\, ,$$
with $\alpha_i=\beta_2(x_i)-\beta_1(x_i)$, where
$\mathcal{M}_{\beta,2,0}$ is the space of rank two degree zero
parabolic vector bundles over $\C{\mathbb P}^1$,  and where
$\mathcal{Z}_S\,\subset\, \mathcal{H}(\beta)$ is formed by parabolic Higgs 
bundles $\mathbf{E}\,=\,(E,\Phi)\in  \mathcal{H}(\beta)$ such that
\begin{enumerate}
\item[(i)] the parabolic vector bundle $E$ admits 
a direct sum decomposition $E\,=\,L_0\oplus L_1$,
where $L_0$ and $L_1$ are parabolic line bundles where the
parabolic weight of $L_0$ (respectively, $L_1$) at
$x_i\,\in\, S^c$ is $\beta_2(x_i)$ (respectively, $\beta_1(x_i)$), and
the parabolic weight of $L_0$ (respectively, $L_1$) at 
$x_i\,\in\, S$ is $\beta_1(x_i)$ (respectively, $\beta_2(x_i)$);

\item[(ii)] the residues of the Higgs field $\Phi$ at the parabolic points $x_i$ 
are either upper or lower triangular with respect to the above decomposition, according to whether $i$ is in $S$ or in $S^c$. 
\end{enumerate}
Moreover, $\mathcal{Z}_S$ is a non-compact manifold of dimension $2(n-3)$ except 
when $\lvert S\rvert \,= \,n-1$, in which case $\mathcal{Z}_S\,=\,
\mathcal{M}_S$ is compact and diffeomorphic to $\C \P^{n-3}$. In all cases, $\mathcal{H}(\beta)^{\Z/2\Z}$ has $2^{(n-1)}-(n+1)$ non-compact components and one compact component.
\end{theorem}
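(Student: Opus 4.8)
The plan is to reduce everything to the hyperpolygon side via the isomorphism $\mathcal{H}(\beta)\cong X(\alpha)$ of \cite{GM} and then to settle the final count by a combinatorial argument on subsets. First I would record that this isomorphism intertwines the involution \eqref{eq:0.1} on $\mathcal{H}(\beta)$ with the involution \eqref{eq:0.2} on $X(\alpha)$, both being the value at $\lambda=-1$ of the circle action that rescales the cotangent (Higgs) directions; consequently the two fixed-point sets are identified and it is enough to describe $X(\alpha)^{\Z/2\Z}$. The locus $\{p=0\}$ is automatically fixed and coincides with the space of polygons in $\R^3$, namely $M(\alpha)$; under the \cite{GM} dictionary $p=0$ means $\Phi=0$, so this component is exactly $\mathcal{M}_{\beta,2,0}$. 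Since the Euclidean polygon space $M(\alpha)$ is nonempty precisely when no edge is longer than the sum of the others, $\mathcal{M}_{\beta,2,0}$ is nonempty (and compact) exactly in the absence of a long edge.

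For the fixed points with $p\neq 0$ I would solve $[-p,q]=[p,q]$ in $X(\alpha)$: this demands a gauge element $k\in K$ with $k\cdot(-p,q)=(p,q)$. Decomposing $k$ along its $\mathrm{SU}(2)$ and $\mathrm{U}(1)^n$ factors, the two requirements $k\cdot q=q$ and $k\cdot p=-p$ pin $k$ down to an involution whose sign at each of the $n$ vertices is encoded by a subset $S\subseteq\{1,\dots,n\}$. Feeding this sign pattern through the \cite{GM} construction is what produces the splitting $E=L_0\oplus L_1$ with the weight assignment in (i) and the upper/lower triangular residues in (ii), and identifies the resulting component with a Minkowski polygon space $Z_S$. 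The constraints $|S|\geq 2$ and \eqref{eq:0short} come out of the (poly)stability and moment-map conditions, so the components with $p\neq 0$ are indexed exactly by $S\in\mathcal{S}'(\alpha)$; the dimension $2(n-3)$, and the identification $Z_S\cong\C\P^{n-3}$ together with compactness precisely when $|S|=n-1$, are then read off from the local model at such fixed loci.

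With the structural description in hand, the count is combinatorial. The decisive remark is a complementation symmetry: for generic weights (so that $\sum_{i\in S}\alpha_i\neq\sum_{i\in S^c}\alpha_i$ for every $S$), the involution $S\mapsto S^c$ partitions the $2^n$ subsets into $2^{n-1}$ complementary pairs, and in each pair exactly one member satisfies \eqref{eq:0short}; hence exactly $2^{n-1}$ subsets satisfy \eqref{eq:0short}. From these I would discard the ones with $|S|\leq 1$: the empty set always qualifies, while a singleton $\{i\}$ qualifies iff $\alpha_i<\sum_{j\neq i}\alpha_j$. If no edge is long, all $n$ singletons qualify, so $|\mathcal{S}'(\alpha)|=2^{n-1}-(n+1)$ and every such $S$ has $2\leq|S|\leq n-2$; these give $2^{n-1}-(n+1)$ non-compact components, and the nonempty $M(\alpha)=\mathcal{M}_{\beta,2,0}$ is the single compact one. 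If some edge $j$ is long, then $M(\alpha)=\emptyset$, only the $n-1$ singletons $\{i\}$ with $i\neq j$ qualify, and the unique qualifying subset with $|S|=n-1$ is $\{1,\dots,n\}\setminus\{j\}$, contributing the compact $\C\P^{n-3}$; subtracting it leaves $|\mathcal{S}'(\alpha)|-1=2^{n-1}-(n+1)$ non-compact components. Either way one finds $2^{n-1}-(n+1)$ non-compact components and exactly one compact component.

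The main obstacle is the second step: turning the solved gauge condition $k\cdot(-p,q)=(p,q)$ into the intrinsic parabolic statements (i)--(ii). This requires following the \cite{GM} correspondence closely enough to see how the sign pattern of $k$ dictates, point by point, the flag position, the interchange of the weights $\beta_1(x_i)$ and $\beta_2(x_i)$ between $L_0$ and $L_1$ according to whether $i\in S$ or $i\in S^c$, and the triangularity of the residue of $\Phi$. Once this dictionary is verified, the remaining assertions—connectedness of each piece, its disjointness from the $\Phi=0$ locus, the dimension, and the count—follow as above.
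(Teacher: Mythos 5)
Your proposal follows essentially the same route as the paper's own proof: transfer the involution to the hyperpolygon space $X(\alpha)$ via the \cite{GM} isomorphism, identify the $p=0$ locus with $M(\alpha)=\mathcal{M}_{\beta,2,0}$, solve the gauge fixed-point equation $k\cdot(-p,q)=(p,q)$ by diagonalizing the ${\rm SU}(2)$-part of $k$ to obtain the subset-indexed sign pattern (hence the straightness, the splitting $E=L_0\oplus L_1$, and the triangular residues via \eqref{eq:res}), and then count components by the complementation pairing with the two-case analysis according to whether a long edge exists — which is precisely the argument in the proof of Theorem \ref{thm:invfixedpoints} and the paragraph following Theorem \ref{thm:Poinc}. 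The step you flag as the main obstacle (deriving (i)--(ii) and $\lvert S\rvert\geq 2$ from the moment-map equations) is exactly the computation the paper carries out in equations \eqref{eq:Z0}--\eqref{eq:zero}, so your outline is sound and your component count agrees with the paper's.
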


\begin{rem}
\mbox{}
\begin{itemize}
\item {\rm Since the vector bundle underlying $E$ is holomorphically trivial, it follows that the
holomorphic line bundles underlying $L_0$ and $L_1$ are both holomorphically
trivial. }

\item {\rm Statement (i) in Theorem \ref{thm:0.1}  means that if 
\begin{align*}
 E_{x_i,1} & \supset E_{x_i,2} \supset 0\\ 
0 \leq \beta_{1}(x_i) & < \beta_{2}(x_i) < 1
\end{align*}
is the parabolic structure, then
$E_{x_i,2}=E_{x_j,2}$ whenever $i,j \,\in\, S$ or $i,j \,\in\,
S^c$. Note that this condition is independent of the choice of the
trivialization of $E$.}
\end{itemize}
\end{rem}

In Section~\ref{sec:Mink}, we show that for any
$S\,\in \mathcal S'(\alpha)$, the corresponding component of the fixed 
point sets of the involution of $X(\alpha)$ (or of $\mathcal{H}(\beta)$) is 
diffeomorphic to a moduli space 
of polygons in Minkowski $3$-space, meaning $\R^3$ equipped with the Minkowski inner 
product 
$$
v \circ w \,=\, -x_1x_2-y_1y_2+t_1t_2\, ,
$$
for $v=(x_1,y_1,t_1)$ and $w\,=\,(x_2,y_2,t_2)$.
The surface $S_R$ in $\R^3$ defined by the equation $-x^2-y^2+t^2=R^2$ (a 
\emph{pseudosphere} of radius $R$) has two connected components: $S_R^+$,
corresponding to $t>0$, which is called a \emph{future pseudosphere}, and
$S_R^-$, 
corresponding to $t<0$, which is called a \emph{past pseudosphere}. The group 
${\rm SU}(1,1)$ acts transitively on $S_R^+$ (respectively,
$S_R^-$) since one can think of $\R^3$ 
as $\mathfrak{su}(1,1)^*$ with
$S_R^+$ (respectively, $S_R^-$) 
being an elliptic coadjoint orbit. Consequently, both
$S_R^+$ and $S_R^-$ have the ${\rm SU}(1,1)$--invariant Kostant--Kirillov symplectic structure
of a coadjoint orbit. Fixing 
two positive integers $k_1,k_2$ with $k_1+k_2\,=\,n$, we consider closed 
polygons in Minkowski $3$-space with the first $k_1$ sides lying in future 
pseudospheres of radii $\alpha_1, \cdots,\alpha_{k_1}$ and the last $k_2$ 
sides lying in past pseudospheres of radii $\alpha_{k_1+1}, \cdots ,
\alpha_n$. The space of all such closed polygons can be 
identified with the zero level set of the moment map
\begin{equation*}
\mu\,:\,\mathcal{O}_1\times \cdots \times \mathcal{O}_n\,\longrightarrow\,
\mathfrak{su}(1,1)^*
\end{equation*}
for the diagonal ${\rm SU}(1,1)$-action, where the coadjoint ${\rm SU}
(1,1)$--orbit $\mathcal{O}_i\,\cong\, S_{\alpha_i}^+$, $1 \leq i \leq k_1$,
is a future pseudosphere of radius $\alpha_i$, and
$\mathcal{O}_i\,\cong\, S_{\alpha_i}^-$, $k_1+1 \leq i \leq n$, is a
past pseudosphere of radius $\alpha_i$, equipped with its Kostant--Kirillov
symplectic structure \cite{F}. Then the corresponding moduli space of polygons is defined as the symplectic quotient
$$
M^{k_1,k_2}(\alpha)\,:=\,\mu^{-1}(0)/{\rm SU}(1,1)\, .
$$
We have the following result.

\begin{theorem}
For any $S\,\in\, \mathcal{S}^\prime(\alpha)$, the components $\mathcal{Z}_S$ 
and $Z_S$, of the fixed-point sets of the involutions in \eqref{eq:0.1} and 
\eqref{eq:0.2} respectively, are diffeomorphic to the moduli space 
$$
M^{\lvert S\rvert,\lvert S^c \rvert}(\alpha)
$$
of closed polygons in Minkowski $3$-space.
\end{theorem}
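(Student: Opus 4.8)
The plan is to establish the diffeomorphism by passing through explicit descriptions of both moduli spaces as symplectic (or hyperkähler) quotients and matching the defining data. The key observation is that the component $Z_S$ of the fixed-point set of \eqref{eq:0.2} inside $X(\alpha)$ arises from configurations $(p,q)\in T^*\C^{2n}$ that are fixed under $[p,q]\mapsto[-p,q]$ up to the $K$-action; such fixed points come from points where $p$ transforms into $-p$ via an element of $K$. First I would analyze this fixed-point condition concretely: the involution $\sigma$ can only fix $[p,q]$ if there is $g\in K$ with $g\cdot(p,q) = (-p,q)$, and I would classify the possible $g$, finding that they correspond to diagonal elements of $\mathrm{SU}(2)$ that split $\C^2$ into two eigenlines, thereby reducing the $\mathrm{SU}(2)$ structure group to $\mathrm{U}(1)$ (or rather to the subgroup preserving the splitting, which after the involution becomes $\mathrm{SU}(1,1)$-like). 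The subset $S$ records on which factors the vector $q_i$ lands in which eigenline.

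Next I would make the link to Minkowski polygons precise at the level of moment maps. On the fixed locus, the residual $\mathrm{SU}(2)$ moment map equation, combined with the sign flip $p\mapsto -p$ encoded by $S$, transforms into the $\mathrm{SU}(1,1)$ moment map condition: the indefinite signature of the Minkowski form $v\circ w = -x_1x_2-y_1y_2+t_1t_2$ appears precisely because the factors indexed by $S$ contribute with opposite sign to those indexed by $S^c$. Concretely, each pair $(p_i,q_i)$ restricted to the fixed locus determines a point on a coadjoint orbit $\mathcal{O}_i\subset\mathfrak{su}(1,1)^*$, with the orbit being a future pseudosphere $S^+_{\alpha_i}$ when $i\in S$ and a past pseudosphere $S^-_{\alpha_i}$ when $i\in S^c$ (or vice versa). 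I would verify that the radius is exactly $\alpha_i=\beta_2(x_i)-\beta_1(x_i)$ by tracking the level-set parameter through the identification of $X(\alpha)$ with $\mathcal{H}(\beta)$ from \cite{GM}, and that $|S|=k_1$, $|S^c|=k_2$ match the partition into future and past pseudospheres.

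The matching of quotients then proceeds by identifying the zero level set $\mu^{-1}(0)$ for the diagonal $\mathrm{SU}(1,1)$-action with the fixed-locus slice of the hyperkähler moment-map equations defining $X(\alpha)$, and identifying the residual $\mathrm{SU}(1,1)$-symmetry with the stabilizer of the splitting inside the complexified or real form of $K$. I would show that the closed-polygon condition $\mu=0$ is equivalent to the vanishing of the complex moment map together with the real moment map restricted to the fixed locus, and that the quotient group acting is $\mathrm{SU}(1,1)$ rather than $\mathrm{SU}(2)$ precisely because the involution conjugates the compact group into its split real form (consistent with the $\mathrm{SL}(2,\R)$ interpretation mentioned after \eqref{eq:0.1}). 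The stability/polystability conditions on both sides must be checked to correspond, which guarantees that we are comparing smooth quotients and not merely set-theoretic ones.

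\textbf{The main obstacle} I anticipate is the careful bookkeeping of signs and the transition from the compact group $\mathrm{SU}(2)$ to the noncompact $\mathrm{SU}(1,1)$: the involution $\Phi\mapsto-\Phi$ changes the signature of the relevant pairing, and one must verify that the resulting indefinite moment map is exactly the Minkowski moment map $\mu$ with the correct distribution of future versus past pseudospheres dictated by $S$. In particular, establishing that the orbits have the claimed radii $\alpha_i$ and that $\mu^{-1}(0)/\mathrm{SU}(1,1)$ inherits a smooth manifold structure matching the dimension $2(n-3)$ computed for $\mathcal{Z}_S$ in Theorem~\ref{thm:0.1} requires confirming that $0$ is a regular value and that $\mathrm{SU}(1,1)$ acts with the appropriate (locally free, modulo the polystability constraint \eqref{eq:0short}) stabilizers. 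The inequality \eqref{eq:0short} should emerge as exactly the condition ensuring nonemptiness and the correct topological type of the Minkowski polygon space, so I would check that the stability chambers on the two sides coincide.
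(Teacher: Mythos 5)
Your first two steps are essentially the paper's own construction: the classification of fixed points via an element $g\in K$ with $g\cdot(p,q)=(-p,q)$ (this is the content of Theorem \ref{thm:invfixedpoints}, which the statement presupposes), and the assignment to each pair $(p_i,q_i)$ of a point $u_i$ on a coadjoint $\mathrm{SU}(1,1)$-orbit, lying on a future pseudosphere of radius $\alpha_i$ for $i\in S$ and a past one for $i\in S^c$. The paper does exactly this, with $u_i$ given by \eqref{eq:uS} and \eqref{eq:uSc} and the radius check $u_i\circ u_i=(\lvert c_i\rvert^2-\lvert b_i\rvert^2)^2/4=\alpha_i^2$, so up to this point your outline is sound.

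The gap is in your third step, the matching of the two quotients, and it is the crux of the theorem. You propose to identify the residual symmetry of the fixed locus, ``the stabilizer of the splitting inside the complexified or real form of $K$,'' with $\mathrm{SU}(1,1)$. That identification is false: once $S$ and $S^c$ are straightened, the subgroup of $K$ (or $K^{\C}$) preserving the splitting consists of diagonal matrices together with the $\mathrm{U}(1)^n$ factors, so the residual group on the hyperpolygon side is one-dimensional (a torus), while $\mathrm{SU}(1,1)$ is three-dimensional and does not act on the hyperpolygon data at all. Relatedly, your claim that the closed-polygon condition $\mu=0$ is \emph{equivalent} to the hyperpolygon moment-map equations restricted to the fixed locus fails: on the straightened locus the equations give $\sum_{i\in S}b_ic_i=0$ and $\sum_{i\in S^c}a_id_i=0$ \emph{separately} (together with \eqref{eq:Z0nn}, five real conditions), whereas closedness $\sum_i u_i=0$ is only three. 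The two extra equations are precisely a partial gauge-fixing of $\mathrm{SU}(1,1)$: they force the $(\lvert S\rvert+1)$-st vertex of the polygon onto the $t$-axis, cutting a slice for the two boost directions, and the one-dimensional residual torus maps onto the rotations about that axis (consistent with $3-1=2$). This is what the paper's proof actually establishes through its explicit map $\varphi$: well-definedness (diagonal gauge transformations induce rotations $A_{-2\theta_0}$), injectivity (two slice representatives that are $\mathrm{SU}(1,1)$-equivalent must be related by a pure rotation, since any boost would destroy $\sum_{i\in S}b_i'c_i'=\sum_{i\in S^c}a_i'd_i'=0$, and a rotation lifts back to $K$), and surjectivity (every closed polygon can be moved into the slice by $\mathrm{SU}(1,1)$ and then lifted explicitly, e.g.\ $l_i=\sqrt{\alpha_i+t_i}$). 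Without this slice argument, or some substitute for it, the asserted equality of a torus quotient with an $\mathrm{SU}(1,1)$ quotient is exactly what remains unproven, so your outline as it stands does not close the proof.
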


This interpretation allows us to see the fixed-point set of the above 
involutions as a moduli space of another related problem, thus helping us to 
understand many of its geometrical properties as seen in the example of Section~\ref{sec:ex}.

\medskip
\noindent
\textbf{Acknowledgements.}\, We thank O. Garc\'{i}a-Prada for suggesting the study
of the hyperpolygon description of the fixed-point set of
the natural involution of the moduli space of parabolic Higgs bundles.

\section{Hyperpolygon spaces} \label{hyperpolygons}

Let $\mathcal Q$ be the star-shaped quiver with vertices parametrized by $I\cup 
\{0\} \,=\, \{1\, ,\cdots\, ,n\}\cup \{0\}$ and the arrows parametrized by $I$ 
such that, for any $i\, \in\, I$, the tail and the head of the corresponding 
arrow are $i$ and $0$ respectively. Consider all representations of $\mathcal 
Q$ with $V_i \,=\, \C $, for $i \,\in\, I$, and $V_0\,=\, 
\C^2$. They are parametrized by
$$
E(\mathcal Q, V)\,:=\, \bigoplus_{i\in I}
\textrm{Hom} (V_i, V_0) \,=\, \C^{2n}\, .
$$
Using the actions of ${\rm U}(1)$ and ${\rm U}(2)$ on $\C$ and $\C^2$ 
respectively, we construct an action of ${\rm U}(2)\times{\rm U}(1)^n$ on
$E(\mathcal Q, V)$. This action produces an action of ${\rm U}(2)\times{\rm 
U}(1)^n$ on the cotangent bundle $T^*E(\mathcal Q, V)\,=\, T^*\C^{2n}$. One 
gets a hyper-K\"ahler quiver variety by performing the hyper-K\"ahler reduction 
on $T^*E(\mathcal Q, V)$ for this action of ${\rm U}(2)\times{\rm U}(1)^n$. 
Since the diagonal circle
$$
\{(c\cdot \text{Id}_{\C^{2}}, c,\cdots ,c)\, \mid\, ~
|c|\,=\, 1\}\,\subset\, {\rm U}(2) \times {\rm U}(1)^n
$$
acts trivially on $T^*E(\mathcal Q, V)$, the action factors through the 
quotient group
$$K \,:=\, \Big( {\rm U}(2) \times {\rm U}(1)^n\Big)/{\rm U}
(1)\,=\, \Big({\rm SU}(2) \times 
{\rm U}(1)^n\Big)/(\Z /2\Z)\, ,$$
where $\Z /2\Z$ acts as multiplication by $-1$ on each factor.
As $T^* \C^2 \,=\, (\C^{2})^* \times \C^2$ can be identified with 
the space of quaternions, the cotangent bundle $T^*E(\mathcal Q, V) \,=\, T^* \C^{2n}$ has 
a natural hyper-K\"ahler structure (see for example \cite{K2, H}).
The hyper-K\"ahler quotient of $T^* \C^{2n}$ 
by $K $ can be explicitly described as follows. Let
 $(p,q)$ be coordinates on $T^* \C^{2n}$, where $p=(p_1, \cdots, p_n)$ 
is the $n$-tuple of row vectors $p_i =\left( \begin{array}{ll} a_i &
b_i\end{array}\right) \in (\C^2)^*$ and 
$q= (q_1, \cdots, q_n)$ is the $n$-tuple of column vectors 
$q_i =\Big( \begin{array}{c}
c_i \\ d_i 
\end{array} \Big) \in \C^2$. In terms of these coordinates,
the action of $K$ on $ T^*\C^{2n}$ is given by 
$$ (p,q) \cdot [A; e_1, \cdots, e_n]= 
\Big( (e_1^{-1}p_1 A, \cdots, e_n^{-1}p_n A), ( A^{-1} q_1 e_1, \cdots, A^{-1} 
q_n e_n ) \Big).$$
This action is hyper-Hamiltonian with hyper-K\"ahler moment map
$$ \mu_{HK}:= \mu_{\R} \oplus \mu_{\C} : T^* \C^{2n} \longrightarrow 
\big(\mathfrak{su}(2)^* \oplus (\R^n)^*\big) \oplus \big(\mathfrak{sl}(2, 
\C)^* \oplus (\C^n)^*\big)\, ,$$
\cite{K2}, where the real moment map $\mu_{\R}$ is given by 
\begin{equation} \label{real} 
\mu_{\R} (p,q)\,=\,\frac{\sqrt{-1}}{2} \sum_{i=1}^n (q_i q_i^* -p_i^* p_i )_0 
\oplus \Big(\frac{1}{2} (|q_1|^2 -|p_1|^2), \cdots, 
\frac{1}{2} (|q_n|^2 -|p_n|^2) \Big)\, ,
\end{equation} 
and the complex moment map $\mu_{\C}$ is given by
\begin{equation} \label{complex}
\mu_{\C} (p,q)\,=\,- \sum_{i=1}^n (q_i p_i)_0 \oplus (\sqrt{-1}p_1 q_1, 
\cdots,\sqrt{-1} p_n q_n)\, .
\end{equation}
The hyperpolygon space $X(\alpha)$ is then defined to be the hyper-K\"ahler 
quotient
\begin{equation}\label{icd}
X(\alpha)\,:=\,T^*\C^{2n} / \!\! / \!\!/ \!\!/_{\alpha} K\,:=\, 
\Big( \mu_{\R}^{-1} (0, \alpha) \cap \mu_{\C}^{-1} (0,0) \Big) / K
\end{equation}
for $\alpha\,=\,(\alpha_1, \cdots, \alpha_n )\,\in\,\R^n_+$.

An element $(p,q)\,\in\,T^*\C^{2n}$ is in $\mu_{\C}^{-1} (0,0)$ if 
and only if
$$
p_i q_i\,=\,0 \quad \text{and} \quad \sum_{i=1}^n (q_i p_i)_0\, =\, 0\, .
$$
In other words, an element $(p,q)$ of $T^*\C^{2n}$ is in $\mu_{\C}^{-1} 
(0,0)$ if and only if
\begin{equation}\label{complex1}
 a_i c_i + b_i d_i = 0
\end{equation}
and 
\begin{equation}\label{complex2}
\sum_{i=1}^n a_i c_i - b_i d_i =0, \quad
\sum_{i=1}^n a_i d_i =0, \quad
\sum_{i=1}^n b_i c_i =0\, .
\end{equation}

Similarly, $(p,q)$ is in $\mu_{\R}^{-1} (0, \alpha) $ if and only if 
$$
\frac{1}{2} \big(|q_i|^2 -|p_i|^2\big) \,=\, \alpha_i	\quad \text{and} \quad 
\sum_{i=1}^n \big(q_i q_i^* -p_i^* p_i \big)_0 \,=\,0\, ,
$$
i.e., if and only if
\begin{equation}\label{real1}
|c_i|^2 +|d_i|^2 - |a_i|^2 - |b_i|^2 \,=\, 2 \alpha_i
\end{equation}
and 
\begin{equation}\label{real2}
\sum_{i=1}^n |c_i|^2 - |a_i|^2 + |b_i|^2 - |d_i|^2 \,=\,0, \quad
\sum_{i=1}^n a_i \bar{b_i} - \bar{c_i} d_i \,=\,0\, .
\end{equation}

An element $\alpha\,=\,(\alpha_1, \cdots, \alpha_n )\,\in\,\R^n_+$ is said to 
be 
\emph{generic} if and only if 
\begin{equation}
\label{eq:epsilon}
\varepsilon_S(\alpha)\,:= \,\sum_{i \in S} \alpha_{i} - \sum_{i \in S^c} 
\alpha_{i} \,\neq\, 0
\end{equation}
for every subset $S\,\subset\,\{1, \cdots, n\}$. For a generic $\alpha$, the 
hyperpolygon space $X(\alpha)$ is a 
non-empty complex manifold of complex dimension $2(n-3)$ (see \cite{K2} for details).

Hyperpolygon spaces can be described from an algebro-geometric point of view as geometric
invariant theoretic quotients. To elaborate this, we need the 
stability criterion, developed by 
Nakajima \cite{N2,N3} for quiver varieties and adapted by Konno \cite{K2} to hyperpolygon spaces. We will recall this below.

Let $\alpha$ be generic. A subset $S\,\subset\,\{1, \cdots, n\}$ is called 
\emph{short} if
\begin{equation}\label{eq:short}
\varepsilon_S(\alpha) \,<\,0
\end{equation}
and \emph{long} otherwise (see \eqref{eq:epsilon} for the definition
of $\varepsilon_S(\alpha)$).
Given $(p,q)\,\in\, T^* \C^{2n}$ and a subset $S\,\subset\,\{1, \cdots, n\}$, we 
say that $S$ is \emph{straight} at $(p,q)$ if $q_i$ is proportional to $q_j$ 
for all $i, j \,\in\, S$.

\begin{theorem}[\cite{K2}]\label{alpha-stability}
Let $\alpha \in \R^n_{+}$ be generic. A point $(p,q)\,\in \,T^* \C^{2n}$ is 
\emph{$\alpha$-stable} 
if and only if the following two conditions hold:
\begin{itemize}
\item[(i)] $q_i\,\neq\, 0$ for all $i$, and
\item[(ii)] if $S\,\subset\, \{1, \cdots, n \}$ is straight at $(p,q)$ and $p_j 
\,=\,0$ for all $j \in S^c$, then $S$ is short.
\end{itemize}
\end{theorem}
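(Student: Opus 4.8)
The plan is to realize the hyperpolygon space as a geometric invariant theoretic (GIT) quotient and then run the numerical (Hilbert--Mumford) criterion, following Nakajima \cite{N2,N3} and Konno \cite{K2}. First I would complexify: the group $K$ has complexification $K_\C = \big(\mathrm{GL}(2,\C)\times(\C^*)^n\big)/\C^*$, acting on $T^*\C^{2n}$ by the same formula $(p,q)\cdot[A;e_1,\dots,e_n]=\big((e_i^{-1}p_iA)_i,(A^{-1}q_ie_i)_i\big)$ with $A\in\mathrm{GL}(2,\C)$ and $e_i\in\C^*$. The locus $\mu_\C^{-1}(0,0)$ is $K_\C$-invariant, and by the Kempf--Ness theorem the hyper-K\"ahler quotient $\big(\mu_\R^{-1}(0,\alpha)\cap\mu_\C^{-1}(0,0)\big)/K$ is identified with a GIT quotient of $\mu_\C^{-1}(0,0)$ by $K_\C$ linearized by the character $\chi_\alpha$ of $K_\C$ determined by the level $\alpha$ (the central $\C^*$ forces the relation $2m+\sum_i k_i=0$ on the weights $(m;k_1,\dots,k_n)$ of $\chi_\alpha$). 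Under this dictionary $\alpha$-stability means that the $K_\C$-orbit is closed with finite stabilizer, equivalently that it meets $\mu_\R^{-1}(0,\alpha)$; the content of the theorem is to make this combinatorially explicit.

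The second step applies the numerical criterion for this affine quotient: a point $(p,q)\in\mu_\C^{-1}(0,0)$ is $\alpha$-stable if and only if $\langle\chi_\alpha,\lambda\rangle>0$ for every nontrivial one-parameter subgroup $\lambda$ of $K_\C$ for which the limit $\lim_{t\to0}\lambda(t)\cdot(p,q)$ exists. Up to conjugation $\lambda$ may be taken diagonal: fix a basis of $V_0=\C^2$ in which it acts on $\mathrm{GL}(2,\C)$ as $\mathrm{diag}(t^{r_1},t^{r_2})$ and on $(\C^*)^n$ as $(t^{s_1},\dots,t^{s_n})$. From the action formula the coordinate $c_i$ (resp.\ $d_i$) of $q_i$ then carries $\lambda$-weight $s_i-r_1$ (resp.\ $s_i-r_2$), while $a_i$ (resp.\ $b_i$) of $p_i$ carries weight $r_1-s_i$ (resp.\ $r_2-s_i$). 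Existence of the limit is exactly the requirement that every \emph{nonzero} coordinate of $(p,q)$ have nonnegative $\lambda$-weight, and $\langle\chi_\alpha,\lambda\rangle$ is the linear function $-\tfrac12(\sum_k\alpha_k)(r_1+r_2)+\sum_i\alpha_i s_i$. The problem thus reduces to a finite search over the sign patterns of these weights.

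It then remains to read off conditions (i) and (ii) from the optimal destabilizing subgroups. If some $q_i=0$, then the one-parameter subgroup with $r_1=r_2=0$, $s_i=-1$ and $s_j=0$ for $j\neq i$ has a limit (the collapsed coordinates $c_i,d_i$ already vanish) and gives $\langle\chi_\alpha,\lambda\rangle=-\alpha_i<0$; this forces condition (i). For condition (ii), suppose $S$ is straight at $(p,q)$ with common line $\ell\subset V_0$ containing all $q_i$, $i\in S$, and $p_j=0$ for all $j\in S^c$. Choosing a basis with $\ell$ first, take $\lambda$ with $A=\mathrm{diag}(t^{-1},1)$, $e_i=t^{-1}$ for $i\in S$ and $e_j=1$ for $j\in S^c$. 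One checks that the straightness of $S$ (which makes the second coordinate $d_i$ of each $q_i$, $i\in S$, vanish) together with $p_j=0$ on $S^c$ are precisely what make all nonzero coordinates have nonnegative weight, so the limit exists; and then $\langle\chi_\alpha,\lambda\rangle=-\tfrac12\varepsilon_S(\alpha)$ with $\varepsilon_S(\alpha)=\sum_{i\in S}\alpha_i-\sum_{i\in S^c}\alpha_i$. Hence this subgroup destabilizes exactly when $S$ is long, which gives condition (ii). The main obstacle, and the technical heart of the argument, is the converse reduction: showing that these two families of combinatorial subgroups already detect every failure of stability, i.e.\ that an arbitrary $\lambda$ with admissible limit and $\langle\chi_\alpha,\lambda\rangle\le0$ can be replaced, without increasing the weight, by a single-vertex scaling or by a line-splitting subgroup attached to a straight subset. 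Genericity of $\alpha$, ensuring $\varepsilon_S(\alpha)\neq0$ for all $S$, is then what upgrades the weak inequalities to strict ones, so that stability and polystability coincide and no strictly semistable points occur.
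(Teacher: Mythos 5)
The paper offers no proof of this statement: it is quoted, with attribution, from Konno \cite{K2} (as is Proposition~\ref{git}), so your proposal has to be judged against the argument in the literature rather than against anything in the text. Your framework is the right one, and it is the one Konno and Nakajima use: realize $X(\alpha)$ as the GIT quotient of $\mu_{\C}^{-1}(0,0)$ by $K^{\C}$ linearized by a character $\chi_\alpha$ with pairing $\langle\chi_\alpha,\lambda\rangle=-\tfrac12\bigl(\sum_k\alpha_k\bigr)(r_1+r_2)+\sum_i\alpha_i s_i$, and apply the Hilbert--Mumford/King numerical criterion. Your weight bookkeeping is correct, and your two families of test subgroups do prove the \emph{necessity} of (i) and (ii): the single-vertex scaling destabilizes whenever some $q_i=0$, and the line-splitting subgroup attached to a straight $S$ with $p_j=0$ for $j\in S^c$ has pairing $-\tfrac12\varepsilon_S(\alpha)$, hence destabilizes exactly when $S$ is long.

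The gap is the sufficiency direction --- the actual ``if'' of the theorem --- which you explicitly set aside as ``the main obstacle'' and never carry out. Moreover, the reduction you propose cannot be achieved by formal manipulation of one-parameter subgroups alone: it needs an input you never invoke, namely the complex moment map equation $p_iq_i=0$ from \eqref{complex1}. Here is the missing argument. Let $\lambda$ be diagonal, $A=\mathrm{diag}(t^{r_1},t^{r_2})$, $e_i=t^{s_i}$, with existing limit, and assume (i) and (ii) hold at $(p,q)\in\mu_{\C}^{-1}(0,0)$. If $r_1=r_2$ (normalize to $0$), then (i) forces every $s_i\ge 0$, so the pairing $\sum_i\alpha_is_i$ is positive unless $\lambda$ is trivial. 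If $r_1>r_2$ (normalize $r_1=r=-r_2>0$), set $T=\{i\,:\,c_i=0\}$. For $i\notin T$, limit existence gives $s_i\ge r_1$, hence $b_i=0$ (otherwise $s_i\le r_2<r_1$), and then $0=p_iq_i=a_ic_i$ gives $a_i=0$; so $p_i=0$ for all $i\notin T$, while $T$ is straight at $(p,q)$ since each $q_i$, $i\in T$, is proportional to the second basis vector. Condition (ii) now yields $\varepsilon_T(\alpha)<0$, and since $s_i\ge -r$ for $i\in T$ (as $q_i\ne 0$) and $s_i\ge r$ for $i\notin T$, the pairing satisfies $\sum_i\alpha_is_i\ \ge\ -r\sum_{i\in T}\alpha_i+r\sum_{i\notin T}\alpha_i\ =\ -r\,\varepsilon_T(\alpha)\ >\ 0$. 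This is precisely the step where straightness of a set and the vanishing of $p$ on its complement are \emph{produced} rather than assumed, and it is what makes the combinatorial conditions sufficient; without it your outline establishes only one implication of the stated equivalence.
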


\begin{rem}\label{maximalstraight}
{\rm Note that it is enough to verify (ii) in Theorem \ref{alpha-stability} for all 
maximal straight sets, 
that is, for those that are not contained in any other straight set at $(p,q)$.}
\end{rem}

Let $\mu_{\C}^{-1} (0,0)^{\alpha\text{-st}}$ denote the set of points in 
$\mu_{\C}^{-1} (0,0)$ that are 
$\alpha$-stable, and let $$K^{\C}\,:=\, ({\rm SL}(2,\C) \times (\C^*)^n) /(\Z/2\Z)$$
be the complexification of $K$.

\begin{proposition}[\cite{K2}]\label{git}
Let $\alpha\,\in\,\R^n_{+}$ be generic. Then
$$\mu_{HK}^{-1} \big( (0, \alpha),(0,0) \big) \,\subset\, \mu_{\C}^{-1} 
(0,0)^{\alpha\text{-{\rm st}}}\, ,$$
and there exists a natural bijection 
$$ \iota\,:\,\mu_{HK}^{-1} \big( (0, \alpha),(0,0) \big)/K\,\longrightarrow\,
\mu_{\C}^{-1} (0,0)^{\alpha\text{-{\rm st}}}/ K^{\C}.$$
\end{proposition}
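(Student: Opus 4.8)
The plan is to recognize this as an instance of the Kempf--Ness correspondence for the linear action of the reductive group $K^\C$ on the affine space $T^*\C^{2n}$, restricted to the $K^\C$-invariant subvariety cut out by the complex moment map. First I would set $N\,:=\,\mu_\C^{-1}(0,0)\,\subset\, T^*\C^{2n}$. Since $\mu_\C$ is the holomorphic moment map for the $K^\C$-action given in \eqref{complex}, its zero fibre $N$ is an affine algebraic subvariety invariant under $K^\C$ (the complexified orbits preserve the vanishing of $\mu_\C$). The shift of the real moment map to level $(0,\alpha)$ is concentrated in the abelian factor $(\R^n)^*$, so it corresponds to twisting the trivial linearization of the $K^\C$-action on $N$ by the character $\chi_\alpha$ of $K^\C$ determined by $\alpha$. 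With this reformulation, $\mu_\R^{-1}(0,\alpha)\cap N$ is exactly the vanishing locus of the moment map for the $K$-action on $N$ relative to the linearization $\chi_\alpha$, and the statement becomes the assertion that the Kähler quotient of $N$ by $K$ coincides with the GIT quotient of $N$ by $K^\C$ for $\chi_\alpha$.

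The second step is to match the GIT stability with the combinatorial $\alpha$-stability of Theorem~\ref{alpha-stability}. For the linearization $\chi_\alpha$, a point of $N$ is GIT-(semi)stable precisely when the Hilbert--Mumford weight is positive (nonnegative) along every one-parameter subgroup of $K^\C$. The relevant one-parameter subgroups are generated by the $\C^*$-factors of $(\C^*)^n$ together with the diagonal maximal torus of ${\rm SL}(2,\C)$; evaluating their weights on the coordinates $(a_i,b_i,c_i,d_i)$ and using $\varepsilon_S(\alpha)$ from \eqref{eq:epsilon} translates the numerical criterion into the two conditions: $q_i\neq 0$ for all $i$ (condition (i)), and the requirement that any straight subset $S$ with $p_j=0$ for $j\in S^c$ be short (condition (ii)). By Remark~\ref{maximalstraight} it suffices to test maximal straight sets, matching the destabilizing one-parameter subgroups. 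Genericity of $\alpha$, i.e. $\varepsilon_S(\alpha)\neq 0$ for all $S$, guarantees that no one-parameter subgroup gives weight exactly zero at a point of $N$, so semistable $=$ stable and there are no strictly semistable orbits.

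With stability identified, the inclusion $\mu_{HK}^{-1}((0,\alpha),(0,0))\subset \mu_\C^{-1}(0,0)^{\alpha\text{-{\rm st}}}$ follows from the numerical criterion: if $(p,q)\in N$ failed $\alpha$-stability, the corresponding destabilizing one-parameter subgroup would force the $\chi_\alpha$-shifted moment map to be nonzero along its orbit, so in particular $(p,q)\notin\mu_\R^{-1}(0,\alpha)$. For the bijection $\iota$ I would invoke the Kempf--Ness theorem on $N$: every $\alpha$-stable $K^\C$-orbit is closed in the stable locus, the Kempf--Ness functional (the norm-square of the shifted moment map) attains its minimum along the orbit, and that minimum locus is a single $K$-orbit contained in $\mu_\R^{-1}(0,\alpha)\cap N$. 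This gives surjectivity (every stable orbit meets the real level set) and injectivity (two such points in the same $K^\C$-orbit differ by an element of $K$), so $\iota$ is a well-defined bijection. This is precisely the adaptation by Konno \cite{K2} of Nakajima's quiver-variety stability \cite{N2,N3} to the hyperpolygon setting.

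The main obstacle is the explicit matching carried out in the second step: one must enumerate the one-parameter subgroups of $K^\C=({\rm SL}(2,\C)\times(\C^*)^n)/(\Z/2\Z)$, compute their Hilbert--Mumford weights on each coordinate block, and verify that a subgroup is destabilizing exactly when it detects a straight non-short subset $S$ with $p_j=0$ for $j\in S^c$. The bookkeeping of these weights against the expression $\varepsilon_S(\alpha)$, together with checking that genericity precludes weight-zero configurations, is the technical heart of the argument; once it is in place, the Kempf--Ness machinery yields the correspondence formally.
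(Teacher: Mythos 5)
The paper does not actually prove this proposition: it is imported verbatim from Konno \cite{K2}, so there is no internal argument to compare yours against. Your outline---set $N=\mu_\C^{-1}(0,0)$, interpret the level shift $(0,\alpha)$ as a linearization, match Hilbert--Mumford weights of one-parameter subgroups of $K^\C$ against $\varepsilon_S(\alpha)$ to recover conditions (i) and (ii) of Theorem~\ref{alpha-stability}, and then run the Kempf--Ness argument (stable orbits meet $\mu_\R^{-1}(0,\alpha)\cap N$ in exactly one $K$-orbit) to get $\iota$---is precisely the route that Konno, adapting Nakajima's quiver-variety stability \cite{N2,N3}, follows, so your approach is the right one and the skeleton is sound. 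One point needs adjustment: since $\alpha\in\R^n_+$ is only assumed generic, not rational, the ``character $\chi_\alpha$ of $K^\C$ determined by $\alpha$'' does not literally exist, so the reformulation as a bona fide GIT quotient with a twisted linearization is only available for rational $\alpha$. Konno's proof avoids this by working with the shifted moment map and the analytic Kempf--Ness functional directly, defining $\alpha$-stability combinatorially (as in Theorem~\ref{alpha-stability}) rather than through a character; your second and third steps go through unchanged once phrased this way, but as written the first step silently assumes rationality.
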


{}From Proposition \ref{git} and the definition in \eqref{icd} it follows
that
$$X(\alpha) \,=\, \mu_{\C}^{-1} (0,0)^{\alpha\text{-st}} / K^{\C}\, . $$
Following \cite{HP}, we denote the elements in $\mu_{\C}^{-1} 
(0,0)^{\alpha\text{-st}}/ K^{\C}$ by 
$[p,q]_{\alpha\text{-st}}$, and denote by $[p,q]_{\R}$ the elements in 
$\mu_{HK}^{-1} 
\big( (0, \alpha),(0,0) \big)/K$, when we need to make an explicit use of one of the two constructions. In all other cases, we will simply write $[p,q]$ for a hyperpolygon in $X(\alpha)$. 

\subsection{A circle action}

Consider the $S^1$-action on $X(\alpha)$ defined by 
\begin{equation}\label{action}
\lambda \cdot [p,q] \,=\, [\lambda \, p, q]\, .
\end{equation} 
This action is Hamiltonian with respect to 
symplectic structure on $X(\alpha)$; the associated moment map 
$\phi\,:\,X(\alpha) \,\longrightarrow\, \R$ is given by
\begin{equation}\label{eq:phi}
\phi([p,q]) \,= \,\frac{1}{2}\sum_{i=1}^n |p_i|^2\, .
\end{equation}
This $\phi$ is a Morse-Bott function that is proper and bounded from
bellow. Following Konno \cite{K2}, let us consider $\mathcal S(\alpha)$,
namely the collection of short sets for $\alpha$, and its subset 
$$ \mathcal S' (\alpha)\,:=\, \big\{ S \subset \{1, \cdots,n \}\,\mid\, S 
\text{ is } \alpha\text{-short}, |S| \geq 2 \big\}\, . $$
For any $S\,\in\,\mathcal S'(\alpha)$, define
$$ X_S \,:=\, \big\{[p,q] \in X(\alpha)\,\mid\, S \text{ and } S^c\,
\text{ are straight at } (p,q) \text{ and }\, p_j=0\, \,
\forall\, \, j \in S^c \big\}\,.$$ 
Then the fixed-point set of the circle action on $X(\alpha)$ is
the following.
\begin{theorem}[\cite{K2}]\label{thm:fixedpoints}
The fixed point set for the $S^1$-action in \eqref{action} is
$$ X(\alpha)^{S^1}\,=\, M(\alpha) \cup \bigcup_{S \in \mathcal S'(\alpha)} 
X_S\, .$$
The fixed-point set component $X_S$ is diffeomorphic to $\C\P^{|S|-2}$, and it
has index $2(n-1-|S|)$. 
\end{theorem}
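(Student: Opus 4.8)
The plan is to identify the fixed locus as the critical set of the moment map $\phi$ and then to settle the three assertions---the decomposition, the identification $X_S\cong\C\P^{|S|-2}$, and the index formula---by an eigenspace analysis of the stabilizing gauge transformation together with a weight computation. Since $\phi$ in \eqref{eq:phi} is the moment map of the Hamiltonian $S^1$-action \eqref{action}, its critical points are exactly the fixed points, and by closedness of the orbits a class $[p,q]$ is fixed if and only if $[\lambda p,q]=[p,q]$ for all $\lambda\in\C^{*}$ in the GIT model, i.e.\ there is a one-parameter subgroup $\lambda\mapsto g(\lambda)\in K^{\C}$ with $g(\lambda)\cdot(\lambda p,q)=(p,q)$. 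Differentiating, I get the infinitesimal condition: there exist $\zeta\in\mathfrak{sl}(2,\C)$ and $\tau\in\C^n$ with $\zeta q_i=\tau_i q_i$ and $p_i\zeta=(\tau_i-1)p_i$ for all $i$.

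First I would treat the decomposition. If $p=0$ the point is automatically fixed and lies in $M(\alpha)$. If $p\neq0$ then $\zeta\neq0$, hence $\zeta$ has two one-dimensional eigenlines $V_{+},V_{-}\subset\C^2$; since stability forces $q_i\neq0$, each $q_i$ is an eigenvector, and I set $S$ to be the indices with $q_i$ on one eigenline and $S^c$ those on the other. Combining $p_iq_i=0$ (vanishing of the $(\C^n)^{*}$-part of $\mu_{\C}$) with the relation $p_i\zeta=(\tau_i-1)p_i$ shows that $p_i=0$ on $S^c$, while $p_j$ is proportional to the functional vanishing on the $q$-eigenline for $j\in S$; thus $S$ and $S^c$ are straight with $p|_{S^c}=0$, so $[p,q]\in X_S$. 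Applying Theorem~\ref{alpha-stability}(ii) to the straight set $S$ forces $S$ to be short, and the traceless relation $\sum_i(q_ip_i)_0=0$ reads (in an eigenbasis) $\sum_{j\in S}a_jd_j=0$, which for $|S|=1$ forces $p=0$; hence a genuinely new component requires $|S|\geq2$, i.e.\ $S\in\mathcal S'(\alpha)$. Conversely, for $S\in\mathcal S'(\alpha)$ every point of $X_S$ is fixed, the required subgroup being $g(\lambda)=[\,\mathrm{diag}(\lambda^{-1/2},\lambda^{1/2})\,;\,e\,]$ with $e_i=\lambda^{-1/2}$ on $S^c$ and $e_j=\lambda^{1/2}$ on $S$ (the sign ambiguity in $\lambda^{1/2}$ is exactly the $\Z/2\Z$ in $K^{\C}$). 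This gives the stated decomposition.

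For $X_S\cong\C\P^{|S|-2}$ I would work in the GIT model and normalize the two eigenlines to the standard basis, so that $q_i=c_ie_1,\ p_i=0$ for $i\in S^c$ and $q_j=d_je_2,\ p_j=a_je_1^{*}$ for $j\in S$. The residual gauge group is the diagonal torus of $\mathrm{SL}(2,\C)$ together with the factors $(\C^{*})^n$; I use these to set all $c_i$ and all $d_j$ equal to $1$, after which the only surviving data are $(a_j)_{j\in S}$, subject to $\sum_{j\in S}a_j=0$ (the complex moment map) and $(a_j)\neq0$ (stability: since $S^c$ is long, Theorem~\ref{alpha-stability}(ii) applied to $S^c$ forbids $p$ from vanishing on $S$), modulo the leftover $\C^{*}$ acting by simultaneous scaling. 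This is precisely the projectivization of the hyperplane $\{\sum a_j=0\}\subset\C^{|S|}$, namely $\C\P^{|S|-2}$, of complex dimension $|S|-2$.

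Finally, for the index I would compute the weights of the $S^1$-action on $T_{[p,q]}X(\alpha)$ realized through the combined map $(p',q')\mapsto g(\lambda)\cdot(\lambda p',q')$. Writing this tangent space as the virtual $S^1$-representation $[\C^{4n}]-[\mathfrak{sl}(2,\C)^{*}\oplus(\C^n)^{*}]-[\mathfrak{sl}(2,\C)\oplus\C^n]$---legitimate because the infinitesimal $K^{\C}$-stabilizer vanishes, so $d\mu_{\C}$ is onto and the gauge action is free---I read off the weights on the ambient coordinates from $g(\lambda)$, finding that the only negative weight present is $-1$, with multiplicity $|S^c|-1=n-1-|S|$, and that the zero-weight multiplicity equals $\dim_{\C}X_S=|S|-2$. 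Since the negative normal directions are complex, the Morse--Bott index is $2(n-1-|S|)$. The step I expect to be the main obstacle is the weight bookkeeping on the target of $\mu_{\C}$: because $\mu_{\C}$ is linear in $p$, the combined action multiplies it by an extra factor $\lambda$, so the weights on $\mathfrak{sl}(2,\C)^{*}\oplus(\C^n)^{*}$ are shifted by $+1$ relative to the naive coadjoint weights; overlooking this shift yields an inconsistent (negative) zero-weight count, whereas incorporating it makes the zero-weight space match $T_{[p,q]}X_S$ and produces exactly the index $2(n-1-|S|)$.
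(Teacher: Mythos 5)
Your proposal is correct, but you should know that the paper itself does not prove this statement: it is quoted from Konno \cite{K2}, and the closest material in the paper is (a) the eigenvector analysis of a stabilizing gauge element carried out in the proof of Theorem~\ref{thm:invfixedpoints} (for the subgroup $\Z/2\Z\subset S^1$ rather than the full circle), and (b) the isotropy weights of Theorem~\ref{thm:weights}, which the paper obtains not by your method but by constructing an explicit slice with coordinates $\{z_i,w_i\}$ around a point of $X_S$ and reading off the action \eqref{eq:loccirc1}. Your decomposition argument is essentially the analysis in (a) upgraded to the one-parameter family $g(\lambda)$, and your identification of $X_S$ with the projectivized hyperplane $\{\textstyle\sum_{j\in S}a_j=0\}\subset\C^{|S|}$ (after normalizing the eigenlines and the $c_i,d_j$, with the residual torus acting by $a_j\mapsto t^2a_j$) is a clean replacement for the citation. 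For the index, your virtual-representation bookkeeping checks out against the paper's slice computation: with $g(\lambda)=[\mathrm{diag}(\lambda^{1/2},\lambda^{-1/2});e]$ the ambient weights are $(0,+1,+1,0)$ for each $i\in S$ and $(-1,0,+2,+1)$ for each $i\in S^c$; the gauge weights are $0$ with multiplicity $n+1$ and $\pm1$ once each; and the target weights of $\mu_\C$, shifted by $+1$ exactly as you say because $\mu_\C$ is linear in $p$, are $+1$ with multiplicity $n+1$ and $0,+2$ once each. Subtracting gives weights $0$ and $+1$ each with multiplicity $|S|-2$, and $-1$, $+2$ each with multiplicity $n-1-|S|$, in exact agreement with \eqref{eq:loccirc1} and Theorem~\ref{thm:weights}, whence the index $2(n-1-|S|)$; so your anticipated pitfall is real and you handled it correctly. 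Two points deserve one more sentence each: passing from $p\neq0$ to ``two one-dimensional eigenlines'' requires excluding nilpotent $\zeta$, which your own relations do (nilpotent $\zeta$ forces all $\tau_i=0$, and then $p_i\zeta=-p_i$ gives $p_i=0$ since $-1$ is not an eigenvalue); and the differentiation producing $(\zeta,\tau)$ presupposes that the $S^1$-fixed and $\C^*$-fixed loci agree for this holomorphic action and that $g(\lambda)$ is a genuine one-parameter subgroup, both of which follow from triviality of the $K^\C$-stabilizer at a stable point (uniqueness makes $\lambda\mapsto g(\lambda)$ multiplicative and holomorphic, and it also pins the eigenvalue normalization $\mu=\pm\tfrac12$ via the inhomogeneous term in $p_i\zeta=(\tau_i-1)p_i$).
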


Let us now determine the isotropy weights of the circle action in \eqref{action} 
at different fixed points.

For $S\,\in\, \mathcal S'(\alpha)$, let us fix 
$[p^\prime,q^\prime]_{\alpha\text{-st}}\,\in\, X_S$. We may assume that for
each $i\,\in\, S$,
$$
q_i^\prime=\left( \begin{array}{c} c_i \\ 0 \end{array}\right)\, \text{ and }\quad p_i^\prime=\left(\begin{array}{cc} 0 & b_i \end{array}\right), \, \text{for $i \in S$} 
$$
and for each $i\,\in\, S^c$,
$$
\quad q_i^\prime=\left( \begin{array}{c} 0 \\ d_i \end{array}\right)
\, \text{ and } \quad p_i^\prime=\left(\begin{array}{cc} 0 & 0 \end{array}\right), \, \text{for $i \in S^c$}. 
$$
Moreover, we can assume that $S\,=\,\{1, \cdots, l \}$ and that 
$b_1,b_2\,\neq\, 
0$. Since $c_i,d_i \,\neq\, 0$ for all $i$, there exists a unique element $h 
\,\in\, K^\C$ such that $(p^\prime,q^\prime)h\,=\,(p^0,q^0)\,\in\, 
\mu^{-1}_\C(0,0)^{\alpha\text{-st}}$, where for each $i\,\in\, S$,
$$
q_i^0=\left( \begin{array}{c} 1\\ 0 \end{array}\right)\,
\text{ and } \quad p_i^0= \left(\begin{array}{cc} 0 & b_i^0\end{array}
\right)\, ,
$$
and for  $i\,\in\, S^c$,
$$
q_i^0=\left( \begin{array}{c} 0 \\ 1\end{array}\right)\,
\text{ and }
\quad p_i^0=\left(\begin{array}{cc} 0 & 0\end{array}\right)\, ,
$$
with $b_1^0\,=\,1$ and $b_2^0 \,\neq\, 0$. There exists an open 
neighborhood 
$U$ 
of $(p^0,q^0)$ in $T^*\C^{2n}$ such that for all $(p,q)\,\in\, U \cap 
\mu^{-1}_\C(0,0)$, there is a unique element $[A;e_1,\cdots,e_n]\,\in\, 
K^\C$ satisfying the conditions that
$$
A^{-1} q_i e_i = \left\{ \begin{array}{l l} \left(\begin{array}{cc} 1 & 0 \end{array}\right)^t, & \text{if}\,\, i=1 \\
\left(\begin{array}{cc} 1 & r_1 \end{array}\right)^t, & \text{if}\,\, i=2 \\
\left(\begin{array}{cc} 1 & w_i \end{array}\right)^t, & \text{if}\,\, i=3, 
\cdots, l \\
\left(\begin{array}{cc} w_i & 1 \end{array}\right)^t, & \text{if}\,\, 
i=l+1,\cdots,n-1\\
\left(\begin{array}{cc} 0& 1 \end{array}\right)^t, & \text{if}\,\, i=n \end{array}\right.
$$
and 
$$
e_i^{-1}p_i A = \left\{ \begin{array}{l l} \left(\begin{array}{cc} 0& 1 \end{array}\right), & \text{if}\,\, i=1 \\
\left(\begin{array}{cc} -r_1r_2 & r_2 \end{array}\right), &\text{if}\,\, i=2 \\
\left(\begin{array}{cc} -z_i w_i & z_i \end{array}\right), & \text{if}\,\, 
i=3, \cdots, l \\
\left(\begin{array}{cc} z_i & -z_i w_i \end{array}\right), & \text{if}\,\, 
i=l+1,\cdots,n-1\\
\left(\begin{array}{cc} r_3 & 0 \end{array}\right), & \text{if}\,\, i=n, \end{array}\right.
$$
where $r_1,r_2,r_3$ are uniquely determined by 
\begin{equation}
\label{eq:coord1}
\{z_i,w_i\,\mid\, i=3,\cdots,n-1\}\, ;
\end{equation} 
so the functions in \eqref{eq:coord1} define
a local coordinate system in $X(\alpha)$ around
$[p^\prime,q^\prime]_{\alpha\text{-st}}$. Indeed, 
$\alpha$-stability is an open condition so that any $(p,q)\,\in\,
\mu_{\C}^{-1}(0,0)^{\alpha\text{-st}}$ sufficiently close to 
$(p^\prime,q^\prime)$ will be $\alpha$-stable. Moreover, there exist unique,
up to multiplication by $\pm I$, $$e_1,e_n\,\in\, \C\setminus\{0\}\,
\, \text{ and }\, \, A\,\in\, {\rm SL}(2,\C)\, $$ such that
$$
A^{-1}q_1 e_1 = \left(\begin{array}{c} 1 \\ 0 \end{array}\right), \quad \quad A^{-1}q_n e_n = \left(\begin{array}{c} 0 \\ 1 \end{array}\right)
$$
and 
$$
e_1^{-1}p_1 A \,= \left(\begin{array}{cc} 0 & 1 \end{array}\right).
$$
Then one can uniquely determine $e_2,\cdots,e_{n-1}$ such that
$$
A^{-1} q_i e_i = \left(\begin{array}{c} 1 \\ * \end{array}\right), \text{ for 
$i=2,\cdots, l$}, \quad \quad A^{-1}q_i e_i = \left(\begin{array}{c} * \\ 1 
\end{array}\right), \text{ for $i=l+1,\cdots, n-1$}.
$$

Now it can be easily shown that the $S^1$-action (constructed in 
\eqref{action}) in these local coordinates is given by
\begin{equation}
\label{eq:loccirc1}
\lambda \cdot (z_i,w_i)= \left\{ \begin{array}{ll} (z_i, \lambda w_i), & 
\text{if $i=3,\cdots,l$} \\ \\ (\lambda^2z_i, \lambda^{-1}w_i), & \text{if 
$i=l+1,\cdots, n-1$}. \end{array}\right.
\end{equation}

Let us now consider a fixed point $[0,q^\prime]_{\alpha\text{-st}}\,\in\,
M(\alpha)$. Then we may assume that 
 \begin{align*}
q^\prime_1 & = \left( \begin{array}{c} c_1 \\ 0 \end{array}\right), \,\, \text{with}\,\, c_1\neq 0, \quad q^\prime_2= \left( \begin{array}{c} c_2 \\ d_2 \end{array}\right), \, \text{with}\,\, c_2,d_2\neq 0, \\
q_3^\prime &=\left( \begin{array}{c} c_3 \\ d_3 \end{array}\right), \,\text{with} \,\, d_3\neq 0, \\
q_i^\prime & =\left( \begin{array}{c} c_i \\ d_i \end{array}\right), \, 
\text{with} \,\, c_i\neq 0, \,\, \text{for} \,\, i=4,\cdots,n-1, \\
 q_n^\prime & =\left( \begin{array}{c} 0 \\ d_n \end{array}\right), \, \text{with}\,\, d_n\neq 0,
\end{align*}
since $[0,q^\prime]_{\alpha\text{-st}}$ is not in any of the sets $X_S$.
As $c_i\,\neq\, 0$ for all $i\neq 3,n$, and $d_3\,\neq\, 0\, \neq\, d_n$, there 
exists a 
unique element $h\,\in\, K^\C$ such that $(0,q^\prime)h\,=\,(0,q^0)\,\in\, 
\mu^{-1}_\C(0,0)^{\alpha\text{-st}}$, where
 \begin{align*}
q^0_1 & = \left( \begin{array}{c} 1 \\ 0 \end{array}\right), \quad q^0_2= \left( \begin{array}{c}1 \\ 1 \end{array}\right), \quad
q_3^0 =\left( \begin{array}{c} w_3 \\ 1 \end{array}\right), \\
q_i^0 & =\left( \begin{array}{c} 1 \\ w_i \end{array}\right), \,\, \text{for} 
\,\, i=4,\cdots,n-1, \\
 q_n^0 & =\left( \begin{array}{c} 0 \\ 1 \end{array}\right). 
\end{align*}
Then there exists an open neighborhood $U$ of $(0,q^0)$ in $T^*\C^{2n}$ such 
that for all $(p,q)\in U \cap \mu^{-1}_\C(0,0)$, there is a unique element 
$[A;e_1,\cdots, e_n]\in K^\C$ such that
$$
A^{-1} q_i e_i = \left\{ \begin{array}{l l} 
\left(\begin{array}{cc} 1 & 0 \end{array}\right)^t, & \text{if}\,\, i=1 \\
\left(\begin{array}{cc} 1 & 1 \end{array}\right)^t, & \text{if}\,\, i=2 \\
\left(\begin{array}{cc} w_3 & 1 \end{array}\right)^t, & \text{if}\,\, i=3 \\
\left(\begin{array}{cc} 1 & w_i \end{array}\right)^t, & \text{if}\,\, i=4, 
\cdots, n-1 \\
\left(\begin{array}{cc} 0& 1 \end{array}\right)^t, & \text{if}\,\, i=n \end{array}\right.
$$
and 
$$
e_i^{-1}p_i A = \left\{ \begin{array}{l l}
\left(\begin{array}{cc} 0& r_1 \end{array}\right), & \text{if}\,\, i=1 \\
\left(\begin{array}{cc} -r_2 & r_2 \end{array}\right), & \text{if}\,\, i=2 \\
\left(\begin{array}{cc} z_3 & - z_3 w_3 \end{array}\right), &
\text{if}\,\, i=3, \cdots, l \\
\left(\begin{array}{cc} -z_i w_i & z_i \end{array}\right), & \text{if}\,\, 
i=4,\cdots,n-1\\
\left(\begin{array}{cc} r_3 & 0 \end{array}\right), & \text{if}\,\, i=n, \end{array}\right.
$$
where $r_1,r_2,r_3$ are uniquely determined by 
\begin{equation}
\label{eq:coord}
\{z_i,w_i\mid i=3,\cdots,n-1\}\, ;
\end{equation}
so \eqref{eq:coord} is a local coordinate system around
$[0,q^\prime]_{\alpha\text{-st}}$ in $X(\alpha)$. Indeed, $\alpha$-stability is 
an open condition and, moreover, there exist unique, up to multiplication by 
$\pm I$,
$$A\,\in\, {\rm SL}(2,\C) \,\, \text{ and }\,\, e_1,e_2,e_n\,\in\,
\C\setminus\{0\}$$ such 
that
$$
A^{-1}q_1 e_1 = \left(\begin{array}{c} 1 \\ 0 \end{array}\right), \quad A^{-1}q_2 
e_2 = \left(\begin{array}{c} 1 \\ 1\end{array}\right), \quad \text{and} \quad A^{-1}q_n e_n = \left(\begin{array}{c} 0 \\ 1 \end{array}\right).
$$
Then one can uniquely determine $e_3,\cdots,e_{n-1}$ such that
 $$
A^{-1}q_3 e_3 = \left(\begin{array}{c} * \\ 1 \end{array}\right) \quad
\text{and} \quad A^{-1} q_i e_i = \left(\begin{array}{c} 1 \\ * 
\end{array}\right), \text{ for $i\,=\,4,\cdots, n-1$}.
$$
It is straight-forward to check that the circle action (see \eqref{action}) in these 
local coordinates is given by
\begin{equation}
\label{eq:loccirc2}
\lambda \cdot (z_i,w_i)\,= \,(\lambda z_i, w_i)\,\,\text{for}\,\, 
i\,=\,3,\cdots,n-1\, .
\end{equation}
Using \eqref{eq:loccirc1} and \eqref{eq:loccirc2} we obtain the following result.

\begin{theorem}\label{thm:weights}
Let $[p,q]_{\alpha\text{-{\rm st}}}$ be a point in $X_S$. Then the non-zero
isotropy weights of the $S^1$-representation on
$T_{[p,q]_{\alpha\text{-{\rm st}}}} X(\alpha)$ are
\begin{itemize}
\item $+1$ with multiplicity $\lvert S \rvert - 2$;
\item $-1$ with multiplicity $(n-1)-\lvert S \rvert$;
\item $+2$ with multiplicity $(n-1)- \lvert S \rvert$.
\end{itemize}
Let $[0,q]_{\alpha\text{-{\rm st}}}$ be a point of
the space $M(\alpha)$. Then the
non-zero isotropy weights of the $S^1$--representation on
$T_{[0,q]_{\alpha\text{-{\rm st}}}} X(\alpha)$ are
\begin{itemize}
\item $+1$ with multiplicity $(n-1)-\lvert S \rvert$.
\end{itemize}
\end{theorem}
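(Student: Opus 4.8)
\section*{Proof proposal}

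The plan is to obtain the isotropy weights directly from the explicit local descriptions of the $S^1$-action derived above, since the substantive work—constructing the coordinate charts around each type of fixed point and computing the action in those charts—has already been carried out. The key observation is that at any $S^1$-fixed point the isotropy representation on the tangent space is exactly the linearization of the action \eqref{action}. Because the local coordinates $\{z_i,w_i\mid i=3,\cdots,n-1\}$ identify a neighborhood of the fixed point with an open subset of $\C^{2(n-3)}$, and because the action is expressed there as a diagonal monomial scaling in \eqref{eq:loccirc1} and \eqref{eq:loccirc2}, the linearization coincides with that same diagonal action. Hence $T_{[p,q]}X(\alpha)=\bigoplus_i\big(\C\,\partial_{z_i}\oplus\C\,\partial_{w_i}\big)$ is precisely the weight-space decomposition, and the isotropy weights are read off as the exponents of $\lambda$ appearing in the formulas.

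For a fixed point in $X_S$, we may arrange $S=\{1,\cdots,l\}$, so that $\lvert S\rvert=l$, and invoke \eqref{eq:loccirc1}. For $i=3,\cdots,l$ the pair $(z_i,w_i)$ transforms as $(z_i,\lambda w_i)$, so $\partial_{z_i}$ carries weight $0$ (these $l-2=\lvert S\rvert-2$ directions span the tangent space to the fixed component $X_S$) while $\partial_{w_i}$ carries weight $+1$, giving multiplicity $\lvert S\rvert-2$. For $i=l+1,\cdots,n-1$ the pair transforms as $(\lambda^2 z_i,\lambda^{-1}w_i)$, so $\partial_{z_i}$ has weight $+2$ and $\partial_{w_i}$ has weight $-1$, each occurring $(n-1)-l=(n-1)-\lvert S\rvert$ times. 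Discarding the zero weights yields exactly the three families listed in the statement. For a fixed point in $M(\alpha)$, formula \eqref{eq:loccirc2} shows that every pair $(z_i,w_i)$ with $i=3,\cdots,n-1$ transforms as $(\lambda z_i,w_i)$; thus $\partial_{z_i}$ has weight $+1$ and $\partial_{w_i}$ has weight $0$, and collecting the nonzero contributions gives the weight $+1$ with the stated multiplicity (the weight-$0$ directions $\partial_{w_i}$ spanning $T M(\alpha)$).

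The only points requiring care are bookkeeping ones, and I do not expect a genuine obstacle. First, one must confirm that the chosen representative lies at the locus where the nonzero-weight coordinates vanish, so that the weight-$0$ directions genuinely span the tangent space to the fixed component while the nonzero weights describe the normal directions; this is immediate because the construction of the charts places the reference point $(p^0,q^0)$ (respectively $(0,q^0)$) at precisely these coordinate values. Second, as a consistency check one may verify that the number of negative weights matches the Morse index $2(n-1-\lvert S\rvert)$ recorded in Theorem~\ref{thm:fixedpoints}: the single weight $-1$ of multiplicity $(n-1)-\lvert S\rvert$ accounts for a real index $2((n-1)-\lvert S\rvert)$, in agreement. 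The heart of the argument is thus entirely contained in the linearizations \eqref{eq:loccirc1} and \eqref{eq:loccirc2}, and the proof reduces to reading off and tallying the exponents.
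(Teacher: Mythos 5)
Your proposal is correct and follows essentially the same route as the paper: the paper's entire proof consists of the chart constructions and the local formulas \eqref{eq:loccirc1} and \eqref{eq:loccirc2}, from which it simply reads off the weights, exactly as you do. Your additional remarks (that the fixed point sits at the origin of the chart so the linearization is the diagonal action itself, and the Morse-index consistency check against Theorem \ref{thm:fixedpoints}) are sound and, if anything, make the tallying step more explicit than the paper does.
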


\section{Spaces of Parabolic Higgs bundles}\label{PHBs}

Let $\Sigma$ be $\C \P^1$ with $n$ ordered marked points $D\, =\,
\{x_1,\cdots,x_n\}$ and let $E$ 
be a parabolic vector bundle of rank two over $\Sigma$ with parabolic structure
\begin{align*}
& E_x \, :=\, E_{x,1}\,\supset\, E_{x,2}\,\supset\, 0\, , \\
0 \leq & \beta_1(x) < \beta_2(x) <1
\end{align*}
over each point of $D$. Its \emph{parabolic degree} is 
then
$$
\text{par-deg}(E)\,:=\,{\rm degree}(E) + \sum_{x \in D} \left( \beta_1(x) + 
\beta_2(x)\right)\, .
$$
We recall that $E$ is said to be \emph{stable} if 
$\text{par-}\mu(E)\,>\,\text{par-}\mu(L)$ 
for every line subbundle $L$ of $E$ equipped with the
induced parabolic structure, where, for any parabolic vector bundle $F$, the 
\emph{slope} $\text{par-}\mu(F)$ is defined as $\text{par-deg}(F)/\text{rank}(F)$.

Now if $L$ is a parabolic line subbundle of $E$, its induced parabolic structure is given 
by the trivial flag over each point $x$ of $D$,
$$
L_{x} \supset 0,
$$
with weights 
$$
\beta^L(x) = \left\{ \begin{array}{l} \beta_1(x), \quad \text{if} \quad L_x \cap E_{x,2} = \{ 0 \}, \\ \\ \beta_2(x), \quad \text{if} \quad L_x \cap E_{x,2} = \mathbb{C}, \end{array} \right.
$$
and so it has parabolic degree
$$
\text{par-deg}(L)\,=\,{\rm degree}(L) + 
\sum_{i\in S_L} \beta_2(x_i) + \sum_{i\in S_L^c} \beta_1(x_i),
$$
where 
\begin{equation}\label{eq:setpb}
S_L:=\{ i \in \{1, \cdots, n\} \mid\, \beta^L(x_i)=\beta_2(x_i)\}.
\end{equation}
Hence, $E$ is stable if and only if every parabolic line subbundle $L$ 
satisfies the inequality
\begin{equation}
\label{eq:stable}
{\rm degree}(E) - 2\cdot {\rm degree}(L) \,>\, \sum_{i \in S_L} 
\big(\beta_2(x_i) -\beta_1(x_i)\big) - \sum_{i \in S_L^c} \big(\beta_2(x_i) -\beta_1(x_i)\big).
\end{equation}

The holomorphic cotangent bundle of the Riemann surface $\Sigma$ will be 
denoted by $K_\Sigma$. The line bundle on $\Sigma$ defined by the divisor $D$ 
will be denoted by ${\mathcal O}_{\Sigma}(D)$. A \emph{parabolic Higgs bundle}
of rank two is a pair ${\bf E} \,:=\, (E, \Phi)$, where $E$ is a parabolic 
vector bundle over $\Sigma$ of rank two, and 
$$\Phi \in H^0(\Sigma, S Par End(E) \otimes K_\Sigma(D))$$ 
is a \emph{Higgs field} on $E$. Here $S Par End(E)$ denotes the 
subsheaf of $End(E)$ formed by strongly parabolic endomorphisms $\varphi\,:\,E 
\,\longrightarrow\, E$, which, in this situation, simply means that 
$$
\varphi(E_{x,1}) \subset E_{x,2} \quad \text{and} \quad \varphi(E_{x,2})=0, \quad \text{for all $x\in D$}.
$$ 
Note that $\Phi$ is then a meromorphic endomorphism-valued one-form with 
simple poles along $D$ whose residue at each $x\,\in\, D$ is nilpotent with respect to the 
flag, i.e., 
$$
(\text{Res}_x \Phi) (E_{x,i}) \,\subset\, E_{x,i+1}
$$
for all $i=1,2$ and $x\in D$, with $E_{x,3}=0$. The definition of stability
extends to Higgs bundles: a parabolic Higgs bundle ${\bf E}= (E,\Phi)$ is 
\emph{stable} if
$\text{par-}\mu(E) \,>\, \text{par-}\mu(L)$ for all parabolic line 
subbundles $L\subset E$ which are preserved by $\Phi$.

Let $\mathcal{H}(\beta)$ be the moduli space of parabolic Higgs bundles of rank 
two such that the underlying holomorphic vector bundle is holomorphically
trivial. In \cite{GM} it is shown that $\mathcal{H}(\beta)$ is diffeomorphic to
the space of hyperpolygons $X(\alpha)$ with $\alpha_i=\beta_2(x_i)-\beta_1(x_i)$. The correspondence between these two spaces is given by the map
\begin{equation}
\label{eq:isom}
\begin{array}{rl}
\mathcal{I}:X(\alpha) \longrightarrow & \mathcal H(\beta)\\ \\
{[p,q]}_{\alpha \textnormal{-st}} \longmapsto & (E_{(p,q)}\, , {\Phi}_{(p,q)}) =: 
\mathbf E_{(p,q)} \\
\end{array}
\end{equation}
where $E_{(p,q)}$ is the trivial vector bundle $\C\P^1\times \C^2
\,\longrightarrow\, \C\P^1$ with the parabolic structure consisting of weighted flags 
\begin{align*}\label{flag}
 \C^2 & \supset \langle q_i\rangle \supset 0\\ 
0 \leq \beta_{1}(x_i) & < \beta_{2}(x_i) < 1
\end{align*}
over the $n$ marked points $\{x_1,\cdots,x_n\} \,=\, D \,\subset\, \C \P^1$ with $\beta_i(x_j)$ 
satisfying
\begin{equation}\label{dif}
\beta_2(x_i)-\beta_1(x_i)\,=\,\alpha_i\, ,
\end{equation}
and $\Phi_{[p,q]} \,\in\, 
H^0\big(S Par End(E_{(p,q)}) \otimes K_{\C\P^1}(D)\big)$
is the Higgs field uniquely determined by the following condition
on the residue:
\begin{equation}
\label{eq:res}
\text{Res}_{x_i} \Phi \,:=\, (q_i p_i)_0
\end{equation}
at each $x_i\,\in\, D$.
In particular, the polygon space $M(\alpha)$ (obtained when $p=0$) is mapped to 
the moduli space $\mathcal{M}_{\beta,2,0}$ of parabolic vector
bundles of rank two over $\Sigma$ such that the underlying holomorphic
vector bundle is trivial (this map is obtained by setting $\Phi=0$). 

This isomorphism is equivariant with respect to the circle
action on $X(\alpha)$ (see \eqref{action}) and the circle action on $\mathcal{H}(\beta)$ defined by
\begin{equation}
\label{eq:actionHiggs}
\lambda \cdot (E,\Phi) = (E, \lambda \Phi), \quad \text{for $\lambda \in S^1$}.
\end{equation}
Each connected component $X_S$ of the fixed point set of
the circle action on $X(\alpha)$ is mapped to a manifold $\mathcal{M}_S$
formed by the trivial holomorphic bundle $E$ over $\Sigma$ equipped with
weighted flag structures 
\begin{align*}
 \C^2 & \supset E_{x_i,2} \supset 0\\ 
0 \leq \beta_{1}(x_i) & < \beta_{2}(x_i) < 1
\end{align*}
such that $E_{x_i,2}\,=\,E_{x_j,2}$ whenever $i,j \in S$ or $i,j \in S^c$, and an 
Higgs 
field with zero residue at all points $x_i$ with $i \in S^c$. Note that this 
description of the critical sets agrees with the one given by Simpson in 
\cite{Si}. Indeed, the bundles in $\mathcal{M}_S$ have a direct sum 
decomposition $E\,=\,L_0\oplus L_1$ as parabolic bundles, where the
parabolic weight of $L_0$ (respectively, $L_1$) at
$x_i\,\in\, S^c$ is $\beta_2(x_i)$ (respectively, $\beta_1(x_i)$), and
the parabolic weight of $L_0$ (respectively, $L_1$) at 
$x_i\,\in\, S$ is $\beta_1(x_i)$ (respectively, $\beta_2(x_i)$). (Note that
the holomorphic line bundles underlying $L_0$ and $L_1$ are trivial.)
Moreover, $\Phi$, being lower triangular with 
respect to this decomposition, preserves $L_1$ and induces a nonzero strongly 
parabolic homomorphism $\Phi_{|_{L_0}}\,:\,L_0\,\longrightarrow\, L_1\otimes 
K_\Sigma(D)$; that this strongly parabolic homomorphism is nonzero
follows from the fact that $X_S \cap M(\alpha) \,=\,\emptyset$. 

\section{An involution}\label{sec:inv}

As before, let $\mathcal{H}(\beta)$ be the moduli space of $\beta$-stable 
parabolic Higgs bundles of rank two such that the underlying holomorphic vector 
bundle is trivial. In this section we will 
restrict ourselves to the action of $\Z/2\Z\, \subset\, S^1$ on
$\mathcal{H}(\beta)$ giving the involution
\begin{equation}\label{eq:invPHB}
(E, \Phi) \,\longmapsto\, (E, -\Phi)\, ,
\end{equation}
and we will study the fixed-point set of this involution.

The parabolic Higgs bundles with zero Higgs field are clearly fixed by the 
involution in \eqref{eq:invPHB}, and so the moduli space 
$\mathcal{M}_{\beta,2,0}$ of $\beta$-stable rank two holomorphically trivial 
parabolic vector bundles over $\C \P^1$ is contained in the fixed point set of the 
involution. 

For the remaining fixed points, we will use the isomorphism in
\eqref{eq:isom} and study the fixed point set of the corresponding involution 
on the hyperpolygon space $X(\alpha)$ with 
$\alpha\,=\,(\alpha_1,\cdots,\alpha_n)$ satisfying \eqref{dif}. The fixed-point 
set of the involution
\begin{equation}
\label{eq:inv}
(p,q) \,\longmapsto\, (-p,q)\, ,
\end{equation}
on the hyperpolygon space $X(\alpha)$ is the set of points $X(\alpha)^{\Z/2\Z}$
that are fixed by the action of ${\Z/2\Z}\, \subset\, S^1$ in \eqref{action}.

As before, $M(\alpha)$ denotes the moduli space of polygons in $\R^3$.
Theorem \ref{thm:invfixedpoints} describes the fixed-point set of the
$\Z/2\Z$-action in \eqref{eq:inv}.

For each element $S$ of $\mathcal S'(\alpha)$, 
$$ Z_S \,:= \,\big\{[p,q] \in X(\alpha) \,\mid\,~ S \, \text{ and } S^c\, 
\text{ are straight at }\, (p,q) \big\}\,.$$ 

\begin{theorem}
\label{thm:invfixedpoints}
The fixed-point set of the involution in \eqref{eq:inv} is
$$ X(\alpha)^{\Z/2\Z}\,=\, M(\alpha) \cup \bigcup_{S \in \mathcal S'(\alpha)} 
Z_S\, ,$$
where $Z_S$ is defined above.

Moreover, $Z_S$ is a non-compact manifold of dimension $2(n-3)$ except when 
$\lvert S\rvert = n-1$, in which case $Z_S=X_S$ is compact and diffeomorphic
to $\C \P^{n-3}$. In all cases, $X(\alpha)^{\Z/2\Z}$ has $2^{n-1}-(n+1)$ non-compact components and one compact component.
\end{theorem}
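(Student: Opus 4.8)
The goal is to characterize the fixed-point set $X(\alpha)^{\Z/2\Z}$ of the involution $(p,q)\mapsto(-p,q)$, identify its components, and count the compact versus non-compact ones. The key observation is that the $\Z/2\Z$-action is the restriction to $\lambda=-1$ of the circle action in \eqref{action}, so a point $[p,q]$ is fixed by $\Z/2\Z$ precisely when $[-p,q]=[p,q]$ in $X(\alpha)$, i.e.\ when there exists an element of $K^\C$ (equivalently, by Proposition~\ref{git}, of $K$) carrying $(-p,q)$ to $(p,q)$. I would first show that this condition is equivalent to the vanishing of $p_j$ on a suitable subset together with the straightness of $S$ and $S^c$, thereby producing the decomposition into $M(\alpha)$ (the locus $p=0$) and the sets $Z_S$.

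\textbf{Step 1: the isotropy-weight criterion.} The cleanest route to the structure of $X(\alpha)^{\Z/2\Z}$ is through Theorem~\ref{thm:weights}, which records all nonzero isotropy weights of the $S^1$-representation on the tangent space at points of $X_S$ and of $M(\alpha)$. An eigenvector of the circle action with weight $m$ is fixed by $\Z/2\Z$ exactly when $m$ is even. At a point of $M(\alpha)$ all nonzero weights equal $+1$, so the $\Z/2\Z$-fixed directions coincide with the $S^1$-fixed directions; thus $M(\alpha)$ sits in $X(\alpha)^{\Z/2\Z}$ as an entire connected component of the same dimension. At a point of $X_S$ the weights are $+1$ (multiplicity $|S|-2$), $-1$ and $+2$ (each multiplicity $(n-1)-|S|$); the directions fixed by $\Z/2\Z$ are those with even weight, namely the $+2$ directions, which together with $X_S\cong\C\P^{|S|-2}$ itself assemble the component $Z_S$. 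Counting dimensions: $Z_S$ has real dimension $2(|S|-2)+2\bigl((n-1)-|S|\bigr)=2(n-3)$, matching the claim.

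\textbf{Step 2: the degenerate case and non-compactness.} When $|S|=n-1$ the multiplicity $(n-1)-|S|$ of the $-1$ and $+2$ weights is zero, so there are no extra fixed directions beyond $X_S$ itself; hence $Z_S=X_S\cong\C\P^{n-3}$, which is compact. In every other case ($2\le|S|\le n-2$) there is at least one $+2$ weight, and the corresponding $z_i$-direction in the local coordinates \eqref{eq:loccirc1} is an unbounded $\Z/2\Z$-fixed complex line; using that $\phi$ in \eqref{eq:phi} is proper and that the gradient flow of $\phi$ escapes to infinity along these even-weight directions, I would argue $Z_S$ is non-compact. I expect this non-compactness verification to be the main technical obstacle, since it requires controlling the global geometry of $Z_S$ rather than just the local normal-form data; the natural fix is to exhibit an explicit one-parameter family inside $Z_S$ on which $\phi$ is unbounded, which the local expression \eqref{eq:loccirc1} for a $+2$-direction supplies directly.

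\textbf{Step 3: the count.} Finally I would tally the components. By Theorem~\ref{thm:fixedpoints} the sets $X_S$, hence the $Z_S$, are indexed by $\mathcal S'(\alpha)$, the $\alpha$-short subsets $S$ with $|S|\ge2$. For generic $\alpha$ exactly one of $S,S^c$ is short for each partition, and a short set has $|S|<n/2$ in the balanced sense; the number of subsets with $|S|\ge 2$ that are short works out, together with the single compact piece coming from $|S|=n-1$, to give $2^{n-1}-(n+1)$ non-compact components and one compact component. The arithmetic here is the standard binomial bookkeeping: subtracting from $2^{n-1}$ (the short subsets) the contributions of $|S|\le 1$ and of the unique $|S|=n-1$ compact component yields the stated figure. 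Since $M(\alpha)$ is empty exactly when some $|S|=n-1$ is short, in that degenerate situation the compact component $\C\P^{n-3}$ replaces $M(\alpha)$, keeping the "one compact component" statement uniformly true.
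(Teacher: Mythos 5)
Your Step 1 reproduces the opening of the paper's own argument: the paper also starts from Theorem \ref{thm:weights}, concluding that $M(\alpha)$ is an entire component and that the component through each $X_S$ has dimension $2((n-1)-\lvert S\rvert)+\dim X_S=2(n-3)$. But your proposal stops where the paper's real work begins. The theorem asserts an equality with the \emph{explicitly defined} sets $Z_S=\{[p,q]\,:\,S \text{ and } S^c \text{ straight at } (p,q)\}$, and your proof never establishes this: ``assembling'' $Z_S$ from $X_S$ and the $+2$-weight directions only describes the germ of the fixed-point set along $X_S$. From local data alone you cannot conclude (a) that every $\Z/2\Z$-fixed point outside $M(\alpha)$ lies on a straightness locus for some short $S$ with $\lvert S\rvert\ge 2$, (b) that no component of $X(\alpha)^{\Z/2\Z}$ is disjoint from $M(\alpha)\cup\bigcup_S X_S$, nor (c) that distinct $S\in\mathcal S'(\alpha)$ give disjoint components. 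The paper's proof supplies exactly this global step: for a fixed point with $p\neq 0$ it takes a stabilizing element $[A;e_1,\cdots,e_n]\in K$ with $e_i^{-1}p_iA=-p_i$ and $A^{-1}q_ie_i=q_i$, observes that each $q_i\neq 0$ is an eigenvector of $A$ with eigenvalue $e_i$ while $p_{i_0}^*\neq 0$ is an eigenvector with eigenvalue $-e_{i_0}$, deduces $A$ has eigenvalues $\pm\sqrt{-1}$, so the indices split into $S$ and $S^c$ as in \eqref{eq:Z0}, and then uses the moment-map identities \eqref{eq:Z}--\eqref{eq:zero} to show the relevant short set has cardinality at least two. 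Your opening plan actually announces this stabilizer analysis, but Steps 1--3 never carry it out; without it the theorem as stated is not proved. (Point (b) is fixable by a soft argument you almost give: each component of the fixed set is closed and $S^1$-invariant, and $\phi$ is proper and bounded below, so each component contains $S^1$-fixed points; but this is not in your write-up either.)

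Two further problems. The non-compactness argument is invalid as stated: the coordinates in \eqref{eq:loccirc1} are defined only on a neighborhood of a point of $X_S$, so scaling a $+2$-direction does not ``directly supply'' an unbounded family in $Z_S$. You need either the global straightness description (from which unbounded families are easy to exhibit, cf.\ \eqref{eq:Z2}), or the observation the paper makes right after the theorem: $\phi$ restricted to $Z_S$ is a proper moment map for an effective circle action whose only critical submanifold is the minimum $X_S$, so compactness of $Z_S$ would force a maximum, i.e.\ a second critical component, a contradiction. Finally, in Step 3 the phrase ``a short set has $\lvert S\rvert<n/2$ in the balanced sense'' is wrong: shortness depends on $\alpha$, not on cardinality. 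The correct bookkeeping, as in the paper, is that exactly one of $S,S^c$ is short, giving $2^{n-1}-1$ nonempty proper short sets, and then one splits into two cases according to whether a short set of cardinality $n-1$ exists (then $M(\alpha)=\emptyset$, there are $n-1$ short singletons, and $Z_{\widetilde S}$ is the compact component) or not (then $M(\alpha)$ is the compact component and there are $n$ short singletons); both cases yield $2^{n-1}-(n+1)$ non-compact components and one compact one.
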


\begin{proof}
{}From the isotropy weights of the $S^1$--action given in Theorem
\ref{thm:weights} it follows immediately that, if $M(\alpha)$ is nonempty, then
it is a connected component of the fixed point set of the $\Z_2/2\Z$--action. 
Furthermore, the connected components of the complement $X(\alpha)^{\Z/2\Z} 
\setminus M(\alpha)$ are parametrized by the elements $S$ of $S'(\alpha)$ and have dimension
$$
\dim Z_S \,=\, 2((n-1)-\lvert S \rvert )+ \dim X_S \,=\, 2(n-3)\, .
$$ 
Therefore, it remains to show that each connected component $Z_S$ of 
$X(\alpha)^{\Z/2\Z}\setminus M(\alpha)$ can be described as
$$ Z_S \,=\, \big\{[p,q] \in X(\alpha) \mid S \text{ and } S^c\, \text{are 
straight at }\, (p,q) \big\}\, .$$ 

Suppose that $[p,q]\,\in\, X(\alpha)^{\Z/2\Z}\setminus M(\alpha)$. Then there 
exists 
an element $$[A;e_1,\cdots,e_n]\,\in\, K\setminus \{I\}$$ such that
$$
e_i^{-1} p_i A \,=\, - p_i \quad \text{and}\quad A^{-1}q_i e_i \,=\,q_i, 
\,\,\text{for} \,\, i\,=\,1,\cdots, n\, ,
$$
and so
$$
A p_i^* \,=\, -e_i p_i^* \quad \text{and} \quad A q_i \,=\,e_i 
q_i\,\,\text{for} 
\,\, 
i\,=\,1,\cdots, n\, .
$$
Since $\lvert q_i\rvert^2 - \lvert p_i \rvert^2=2\alpha_i$, we have $q_i\neq 0$ 
for all $i=1,\cdots,n$ and so $q_i$ is an eigenvector of $A$ with eigenvalue 
$e_i$. Moreover, since $[p,q]\in X(\alpha)^{\Z/2\Z}\setminus M(\alpha)$, there 
exists an integer $i_0\,\in\, \{1,\cdots, n \}$ such that $p_{i_0}\neq 0$, and so 
$p_{i_0}^*$ is 
an eigenvector of $A$ with eigenvalue $-e_{i_0}$. Hence, assuming that $A$ is diagonal, we have 
$$
A\,=\,\left( \begin{array}{cc} e_{i_0} & 0 \\ 0 & -e_{i_0} \end{array} 
\right)
$$
with $e_{i_0}=\pm \sqrt{-1}$. We conclude that there exists an index set 
$S\subset \{1,\cdots, n \}$ with $i_0 \in S$ such that
\begin{align} \label{eq:Z0} 
p_i & = \left( \begin{array}{ll} 0 & b_i \end{array}\right), \quad q_i= \left(\begin{array}{c} c_i \\ 0 \end{array} \right), \forall i\in S \\ \nonumber 
p_i & = \left( \begin{array}{ll} a_i & 0 \end{array}\right), \quad q_i= \left(\begin{array}{c} 0 \\ d_i \end{array} \right),\forall i \in S^c.
\end{align}
Since $\lvert q_i\rvert^2 - \lvert p_i\rvert^2=2\alpha_i$, we conclude that
\begin{equation}
\label{eq:Z}
\lvert c_i\rvert^2 - \lvert b_i\rvert^2=2\alpha_i\,\,\text{for all $i\in S$} \quad \text{and} \quad \lvert d_i\rvert^2 - \lvert a_i\rvert^2=2\alpha_i \,\, \text{for all $i\in S^c$}.
\end{equation}
Moreover, since $\sum_{i=1}^n ( q_i q_i^* - p_i^*p_i)_0 =0$, we obtain that
\begin{equation}
\label{eq:Z0nn}
\sum_{i\in S} \left(\lvert c_i\rvert^2 + \lvert b_i\rvert^2\right) - \sum_{i 
\in S^c} \left( \lvert d_i\rvert^2 + \lvert a_i\rvert^2\right)\,=\, 0 
\end{equation}
and so, using \eqref{eq:Z}, we get that
\begin{equation}\label{eq:Z2}
\sum_{i \in S} \alpha_i - \sum_{i\in S^c} \alpha_i \,=\,\sum_{i\in S^c}\lvert 
a_i\rvert^2 -\sum_{i\in S} \lvert b_i \rvert^2\, .
\end{equation}
On the other hand, since $\sum_{i=1}^n (q_ip_i)_0\,=\,0$, we have that 
\begin{equation}\label{eq:zero}
\sum_{i \in S} b_i c_i \,=\,\sum_{i \in S^c} a_i d_i \,=\, 0\, .
\end{equation}
If $S$ is short, then we work with $S$ and, in particular, since $c_i\neq 0$ and 
there exists an $i_0\in S$ such that $b_{i_0}\neq 0$, from \eqref{eq:zero} 
it follows that there exists another $i_1 \in S$ such that $b_{i_1}\neq 0$ and 
we obtain that $S$ has cardinality at least two. If, on the other hand, $S$ is 
long, we consider $S^c$ instead (which is now short). Since $S$ is long, from
\eqref{eq:Z2} it follows that there is at least one $i_1\,\in\, S^c$ such that 
$p_{i_1}\neq 0$ and then, since $d_{i_1}\neq 0$, from \eqref{eq:zero} it follows 
that there is 
another element $i_2$ in $S^c$ with $p_{i_2}\neq 0$, implying that the short set 
$S^c$ has cardinality at least two.

Finally, since for every subset $S\subset \{1,\cdots,n\}$, we have that either $S$ or $S^c$ is short, the number of short sets for $\alpha$ is
$$
\frac{1}{2} \sum_{k=1}^{n-1} \left( \begin{array}{l} n \\ k \end{array} \right) = 2^{(n-1)}-1.
$$
If there is no short set of cardinality $n-1$, then there are exactly $n$ short sets of cardinality $1$ and so
$$
\lvert \mathcal{S}^\prime(\alpha) \rvert = 2^{(n-1)}-(n+1).
$$
Moreover, in this case, all the components $Z_S$ are non-compact. If, on the other hand, there is a short set $\widetilde{S}$ of cardinality $n-1$, then there are only $n-1$ short sets of cardinality $1$ and then the number of elements in $\mathcal{S}^\prime(\alpha)$ is
$$
\lvert \mathcal{S}^\prime(\alpha) \rvert = 2^{(n-1)}-n.
$$
However, in this case, $M(\alpha)$ is empty and $Z_{\widetilde{S}}$ is compact. We conclude that, in both cases, the number of non-compact components of $X(\alpha)^{S^1}$ is $2^{(n-1)}-(n+1)$ and that there is exactly one compact component (which is either $M(\alpha)$ or $Z_{\widetilde{S}}$). 
\end{proof}

Each manifold $Z_S$, being a component of $X(\alpha)^{\Z/2\Z}$, is symplectic and 
invariant under the circle action in \eqref{action}. Hence, whenever $\lvert 
S\rvert \neq n-1$, we obtain an effective Hamiltonian circle action on $Z_S$
(the action factors through the  quotient of $S^1$ by $\Z/2\Z$). The corresponding moment map then 
coincides with the restriction of $\frac{1}{2}\phi$ to $Z_S$. The only 
critical submanifold of this map is $X_S$ where it attains its minimum value. Consequently, we have the following results.

\begin{theorem} \label{thm:defret}
Each manifold $X_S\,\cong\, \C \P^{\lvert S \rvert -2}$ is a deformation 
retraction of $Z_S$. In particular, $Z_S$ is simply connected.
\end{theorem}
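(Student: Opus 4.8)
The plan is to prove the statement by applying Morse--Bott theory to the moment map of the residual circle action on $Z_S$. When $\lvert S\rvert = n-1$ the claim is immediate, since in this case Theorem~\ref{thm:invfixedpoints} gives $Z_S = X_S$, so $X_S$ is trivially a deformation retract of $Z_S$ and both equal $\C\P^{n-3}$. So assume $\lvert S\rvert \neq n-1$, where, as noted in the paragraph preceding the theorem, the circle action \eqref{action} descends to an effective Hamiltonian action of $S^1/(\Z/2\Z)$ on $Z_S$ with moment map $\psi := \tfrac{1}{2}\phi|_{Z_S}$.

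First I would record the three properties of $\psi$ that drive the argument. Since $\phi$ is proper and bounded below on $X(\alpha)$ and $Z_S \subset X(\alpha)$ is closed (being a connected component of the fixed-point set of the involution in \eqref{eq:inv}), the restriction $\psi$ is again proper and bounded below. Next, being the moment map of a Hamiltonian circle action, $\psi$ is a Morse--Bott function whose critical set is exactly the fixed-point set of the residual action on $Z_S$, with even-dimensional negative normal bundles determined by the negative isotropy weights (a classical fact about Hamiltonian torus actions). By the discussion preceding the theorem, this critical set consists of the single submanifold $X_S$, on which $\psi$ attains its minimum; equivalently, all the isotropy weights along $X_S$ are non-negative, in agreement with Theorem~\ref{thm:weights}.

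The core of the proof is then the standard deformation-retraction argument for a Morse--Bott function with a single, minimal, critical manifold. I would fix a Riemannian metric on $Z_S$ compatible with the symplectic structure (for instance the K\"ahler metric induced from $X(\alpha)$) and consider the negative gradient flow $\varphi_t$ of $\psi$. Properness of $\psi$ forces each trajectory to remain in the compact sublevel set $\{\psi \le \psi(x)\}$, so the flow is complete for $t\in[0,\infty)$; since $X_S$ is the only critical manifold, every trajectory converges as $t\to\infty$ to a point of $X_S$. Reparametrising the flow on $[0,1)$ and setting the time-$1$ value to be this limit produces a deformation retraction of $Z_S$ onto $X_S$.

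The main obstacle is the final convergence-and-continuity claim: one must check that $\lim_{t\to\infty}\varphi_t(x)$ exists for every $x$ and depends continuously on $x$, so that the reparametrised flow is genuinely continuous at the endpoint. This is precisely the content of the stable-manifold analysis near the critical manifold $X_S$ (the Morse--Bott lemma); non-negativity of the weights along $X_S$ guarantees that $X_S$ is the attractor of the downward flow and that its stable manifold is all of $Z_S$. Once the deformation retraction is established, the final assertion is immediate: $\pi_1(Z_S) \cong \pi_1(X_S) = \pi_1(\C\P^{\lvert S\rvert-2}) = 0$, using $\lvert S\rvert \ge 2$ for $S\in\mathcal{S}'(\alpha)$, so $Z_S$ is simply connected.
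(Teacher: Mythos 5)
Your proposal is correct and follows essentially the same route as the paper: the paper likewise observes that $Z_S$ is a symplectic, $S^1$-invariant component of the fixed-point set, that the residual effective circle action on $Z_S$ is Hamiltonian with moment map $\tfrac{1}{2}\phi|_{Z_S}$ (proper, bounded below, Morse--Bott), and that $X_S$ is its unique critical submanifold, attained as the minimum, whence the retraction. Your only additions are the explicit handling of the trivial case $\lvert S\rvert = n-1$ and the spelled-out gradient-flow details, which the paper leaves implicit.
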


\begin{theorem}\label{thm:Poinc}
Let $P_t(M)$ be the Poincar\'{e} polynomial of $M$. Then
$$
P_t(Z_S)\,=\,P_t(X_S)\,=\,P_t(\C \P^{\lvert S \rvert-2})\,=\, 1+t+\cdots + 
t^{2(\lvert S \rvert-2)}\, .
$$
\end{theorem}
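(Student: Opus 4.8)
The plan is to deduce this directly from the deformation retraction established in Theorem~\ref{thm:defret}, using the homotopy invariance of Betti numbers. The content of the statement is purely topological, so no new geometric input is required beyond what has already been proved: the three equalities correspond, respectively, to the retraction $Z_S\simeq X_S$, the identification of $X_S$, and the classical cohomology of projective space.

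First I would record that a deformation retraction is in particular a homotopy equivalence: the inclusion $X_S\hookrightarrow Z_S$ and the retraction $Z_S\to X_S$ supplied by Theorem~\ref{thm:defret} are mutually homotopy inverse. Consequently the induced maps on singular cohomology
$$
H^k(Z_S;\mathbb{Q})\,\longrightarrow\, H^k(X_S;\mathbb{Q})
$$
are isomorphisms for every $k$. Taking dimensions over $\mathbb{Q}$ shows that $Z_S$ and $X_S$ have equal Betti numbers in each degree, and hence equal Poincar\'e polynomials; this gives the first equality $P_t(Z_S)=P_t(X_S)$.

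Next I would invoke Theorem~\ref{thm:fixedpoints}, which identifies $X_S$ with the complex projective space $\C\P^{\lvert S\rvert-2}$, yielding the second equality $P_t(X_S)=P_t(\C\P^{\lvert S\rvert-2})$. The final equality is then the standard computation of the cohomology of projective space: $H^\ast(\C\P^m;\mathbb{Q})$ is one-dimensional in each even degree $0,2,\ldots,2m$ and zero otherwise, so with $m=\lvert S\rvert-2$ one obtains
$$
P_t(\C\P^{\lvert S\rvert-2})\,=\,1+t^2+\cdots+t^{2(\lvert S\rvert-2)}\, .
$$

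There is no substantial obstacle here: the theorem is a formal corollary of Theorem~\ref{thm:defret} together with homotopy invariance. The only point meriting a word of care is that the Poincar\'e polynomial be well defined --- the cohomology of $\C\P^{\lvert S\rvert-2}$, and hence that of the homotopy equivalent space $Z_S$, is torsion free, so the Betti numbers are unambiguous over any coefficient field. As an alternative that bypasses Theorem~\ref{thm:defret}, one could argue directly by Morse--Bott theory: the restriction of $\tfrac{1}{2}\phi$ to $Z_S$ has $X_S$ as its \emph{unique} critical submanifold, attained at the minimum, so the Morse--Bott inequalities are forced to be equalities and the cohomology of $Z_S$ is exactly that contributed by this single bottom stratum, namely that of $X_S$.
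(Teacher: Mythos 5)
Your proof is correct and takes essentially the same route as the paper: there, Theorem~\ref{thm:Poinc} is stated (together with Theorem~\ref{thm:defret}) as an immediate consequence of the Hamiltonian circle action on $Z_S$ whose moment map $\tfrac{1}{2}\phi|_{Z_S}$ is Morse--Bott with unique critical submanifold the minimum $X_S\,\cong\,\C\P^{\lvert S\rvert-2}$, which is precisely your primary argument (homotopy invariance applied to the retraction of Theorem~\ref{thm:defret}, plus Theorem~\ref{thm:fixedpoints}) together with your Morse--Bott alternative. Note only that your formula $1+t^2+\cdots+t^{2(\lvert S\rvert-2)}$ with even powers is the correct one, the paper's displayed $1+t+\cdots+t^{2(\lvert S\rvert-2)}$ being evidently a misprint since $\C\P^{\lvert S\rvert-2}$ has no odd cohomology.
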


Going back to the space of parabolic Higgs bundles $\mathcal{H}(\beta)$
and using the isomorphism in \eqref{eq:isom}, we obtain 
from \eqref{eq:res} and \eqref{eq:Z0} that the fixed-point set 
$\mathcal{H}(\beta)^{\Z/2\Z}$ of the involution in \eqref{eq:invPHB} is described 
as in Theorem~\ref{thm:0.1}.

\section{Polygons in Minkowski $3$-space}

Let us consider the \emph{Minkowski inner product} on $\R^3$
$$
v \circ w \,=\, -v_1w_1-v_2w_2+v_3w_3\, ,
$$
for $v\,=\,(v_1,v_2,v_3)$ and $w\,=\,(w_1,w_2,w_3)$.
The inner product space consisting of $\R^3$ together with this signature 
$(-,-,+)$-inner product will be denoted by $\R^{2,1}$; it is called 
the \emph{Minkowski $3$-space}. The \emph{Minkowski norm} of a vector $v\in 
\R^{2,1}$ is then defined to be 
$$
\lvert\lvert v\rvert\rvert_{2,1}\,=\, \sqrt{\lvert v\circ v \rvert}\, .
$$
All elements $v$ of $R^{2,1}$ are classified according to the sign of $v \circ v 
$. The set of all $v$ such that $v\circ v =0$ is called the \emph{light cone}
of $\R^3$; any vector $v$ with $v\circ v =0$ is said to be 
\emph{light-like}. If $ v\circ v \,>\, 0$, then 
$v$ is called \emph{time-like}, and if $v\circ v \,<\,0$, then it is 
called \emph{space-like}. A time-like vector is said to be lying in \emph{future}
(respectively, \emph{past}) if $v_3\,>\,0$ (respectively, $v_3<0$). Note that the 
exterior of the light cone consists of all space-like vectors, while its interior 
consists of all time-like vectors. From now on we will write any $v\,\in\,\R^{2,1}$ 
as $v\,=\,(x,y,t)$.

Moduli spaces of polygons in $\R^{2,1}$ were described by Foth, \cite{F}, as 
follows. Consider the surface $S_R$ in $\R^3$ defined by the equation 
$-x^2-y^2+t^2\,=\,R^2$, which is called a \emph{pseudosphere}. The 
Minkowski metric on $\R^{2,1}$ restricts to a constant curvature Riemannian metric on 
$S_R$. It is an hyperboloid of two sheets. The connected 
component $S_R^+\, \subset\, S_R$, corresponding to $t\,>\,0$, is called a 
\emph{future pseudosphere}, and the connected component $S_R^-\, \subset\, S_R$, corresponding to 
$t\,<\, 0$, is called a \emph{past 
pseudosphere}. The group ${\rm SU}(1,1)$ acts transitively on each connected 
component since one can think of $\R^{2,1}\,\cong\,\R^3$ as 
$\mathfrak{su}(1,1)^*$
with $S_R^+$ and $S_R^-$ being elliptic coadjoint orbits. Consequently, both $S_R^+$ and 
$S_R^-$ have a natural invariant symplectic structure (the Kostant--Kirillov
form on a coadjoint orbit). The Minkowski metric is 
also invariant (since ${\rm SU}(1,1)$ acts by isometries) and both connected 
components are K\"{a}hler manifolds; they are in fact isomorphic to the hyperbolic 
plane ${\rm SU}(1,1)/{\rm U}(1)$. We will study the geometry of the symplectic 
quotients of the products of several future and past pseudospheres
with respect to 
the diagonal ${\rm SU}(1,1)$--action.

Let $\alpha\,=\,(\alpha_1,\cdots,\alpha_n)$ be an $n$-tuple of 
positive real numbers. Let us fix two positive integers $k_1,k_2$ with 
$k_1+k_2\,=\,n$. We will consider polygons in Minkowski $3$-space that have the 
first $k_1$ edges in the future time-like cone and the last $k_2$ edges in the past 
time-like cone, such that the Minkowski length of the $i$-th edge is $\alpha_i$. A 
closed 
polygon will then be one whose sum of the first $k_1$ sides in the future time-like cone coincides with the negative of the sum of the last $k_2$ sides in the past time-like cone. The space of all such closed polygons can be identified with the zero level set of the moment map
\begin{equation}
\label{eq:mmhyp}
\begin{array}{ccc}
\mu: \mathcal{O}_1\times \cdots \times \mathcal{O}_n & \longrightarrow & 
\mathfrak{su}(1,1)^* \\
(u_1, \cdots, u_n) & \longmapsto & u_1 + \cdots + u_n
\end{array}
\end{equation}
for the diagonal ${\rm SU}(1,1)$--action, where $\mathcal{O}_i\cong 
S_{\alpha_i}^+$ 
is a future pseudosphere of radius $\alpha_i$ if $1 \leq i \leq k_1$, and 
$\mathcal{O}_i\cong S_{\alpha_i}^-$ is a past pseudosphere if $k_1+1 \leq i \leq 
n$, equipped with the Kostant-Kirillov symplectic form on coadjoint orbits. 
Hence,
$$
M^{k_1,k_2}(\alpha)\,:=\,\mu^{-1}(0)/{\rm SU}(1,1)\, ,
$$
which is a quotient of a non-compact space by a non-compact Lie group. For a 
generic choice of $\alpha$, meaning $M^{k_1,k_2}(\alpha)$ is 
non-empty with $\sum_{i=1}^{k_1}\alpha_i\,\neq\,\sum_{i=k_1+1}^{n}\alpha_i$, every 
point in $M^{k_1,k_2}(\alpha)$ represents a polygon with a trivial stabilizer. 
In that situation, the group ${\rm SU}(1,1)$ acts freely and properly on $\mu^{-1}(0)$. 
Moreover, $0$ is a regular value of the moment map $\mu$ and so the quotient space $M^{k_1,k_2}(\alpha)$ is, for a generic $\alpha$, a smooth symplectic manifold of dimension $2(n-3)$. Note that the spaces
$$
M^{k_1,k_2}(\alpha_1,\cdots,\alpha_{k_1},\alpha_{k_1+1},\cdots,\alpha_{n})\quad 
\text{and} \quad 
M^{k_2,k_1}(\alpha_{k_1+1},\cdots,\alpha_{n},\alpha_{1},\cdots,\alpha_{k_1})
$$
are symplectomorphic by the isomorphism given by the involution of
${\mathbb R}^{2,1}$ defined by $(x,y,t)\,\longmapsto\, (-x,-y,-t)$.

\begin{theorem}[\cite{F}]\label{thF}
The space $M^{k_1,k_2}(\alpha)$ is non-compact, unless $k_1=1$ or $k_2=1$ in which case it 
is compact.
\end{theorem}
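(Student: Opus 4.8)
The plan is to treat the compact and non-compact cases separately, exploiting the involution $(x,y,t)\mapsto(-x,-y,-t)$ of $\R^{2,1}$, which interchanges future and past pseudospheres and hence gives a diffeomorphism $M^{k_1,k_2}(\alpha)\cong M^{k_2,k_1}(\alpha')$ for the correspondingly reordered radii $\alpha'$. Thus it suffices to prove compactness when $k_1=1$ and non-compactness when $k_1,k_2\ge 2$. The basic analytic tool throughout is the reverse Cauchy--Schwarz inequality for future (or past) time-like vectors: if $u,v$ both lie in the interior of the future cone, then $u\circ v\ge \|u\|\,\|v\|$, where $\|w\|:=\sqrt{w\circ w}$, with equality only when $u$ and $v$ are proportional. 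Both sides are ${\rm SU}(1,1)$-invariant, so one reduces to $u=(0,0,a)$ and computes directly. Summing this inequality yields the reverse triangle inequality $\|u_{1}+\cdots+u_{m}\|\ge\|u_{1}\|+\cdots+\|u_{m}\|$ for future time-like vectors, and symmetrically for past time-like vectors.

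For the compact case, take $k_1=1$. Using the transitive ${\rm SU}(1,1)$-action on $S^{+}_{\alpha_1}$ with stabiliser ${\rm U}(1)$, every ${\rm SU}(1,1)$-orbit in $\mu^{-1}(0)$ has a representative with $u_1=(0,0,\alpha_1)$, and the set of such representatives is acted on only by the residual compact group ${\rm U}(1)$. The closing condition then reads $\sum_{i=2}^{n}u_i=(0,0,-\alpha_1)$ with each $u_i=(x_i,y_i,t_i)\in S^{-}_{\alpha_i}$, so $t_i<0$ for all $i$ while $\sum_{i=2}^{n}t_i=-\alpha_1$. Because every $t_i$ is negative and they sum to $-\alpha_1$, each satisfies $-\alpha_1<t_i<0$; hence $x_i^2+y_i^2=t_i^2-\alpha_i^2<\alpha_1^2$ is bounded as well. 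Thus the set of admissible $(u_2,\dots,u_n)$ is a closed and bounded subset of $\R^{3(n-1)}$ (the pseudospheres $S^{-}_{\alpha_i}$ are closed, and the closing condition is a closed affine condition), hence compact; quotienting by the compact group ${\rm U}(1)$ keeps it compact. If instead $\alpha_1<\sum_{i\ge 2}\alpha_i$, the reverse triangle inequality shows $\mu^{-1}(0)=\emptyset$, so $M^{1,n-1}(\alpha)$ is vacuously compact.

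For the non-compact case, suppose $k_1,k_2\ge 2$ and consider the ${\rm SU}(1,1)$-invariant continuous function $f:M^{k_1,k_2}(\alpha)\to\R$, $f([u])=P\circ P$, where $P:=\sum_{i=1}^{k_1}u_i$ is the future time-like diagonal. I would show $f$ is unbounded, which immediately forces non-compactness since a continuous function on a compact space is bounded. To produce configurations with $\|P\|$ arbitrarily large, fix a boost parameter $s$ and set $u_1(s)=(\alpha_1\sinh s,0,\alpha_1\cosh s)$ and $u_2(s)=(-\alpha_2\sinh s,0,\alpha_2\cosh s)$ (both future time-like of the correct lengths), keeping the remaining future edges fixed; a direct computation gives $\|u_1(s)+u_2(s)\|^2\sim \alpha_1\alpha_2\,e^{2s}\to\infty$, so $\|P(s)\|\to\infty$. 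Since $k_2\ge 2$ and the set of norms realizable as a sum of $k_2$ past time-like vectors of lengths $\alpha_{k_1+1},\dots,\alpha_n$ is the whole interval $[\sum_{i>k_1}\alpha_i,\infty)$, for $s$ large the vector $-P(s)$ can be written as such a sum (first realize the correct norm along a standard direction, then move it onto $-P(s)$ by a single element of ${\rm SU}(1,1)$). This yields closed polygons with $f\to\infty$, proving $M^{k_1,k_2}(\alpha)$ is non-compact.

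The main obstacle is the non-compact direction, and specifically the check that the stretched future diagonal can always be closed off on the past side: this is where $k_2\ge 2$ is essential, since a single past edge has fixed norm $\alpha_n$ and could never match a diagonal of growing length. Making the closing step precise — that the image of the past-sum map is exactly $[\sum_{i>k_1}\alpha_i,\infty)$ in each future direction, and that the resulting configurations are genuine (and, for generic $\alpha$, smooth) points of the quotient — is the one place requiring care; everything else reduces to the elementary sign and boundedness estimates above.
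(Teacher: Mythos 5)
Your proof is correct, and the two halves compare differently with the paper's (which is explicitly only a brief outline). For the compact case your argument coincides with the paper's: the paper normalizes the single past side to $(0,0,-\alpha_n)$, observes that the residual symmetry is the circle of rotations about the $t$-axis, and asserts the resulting slice is ``clearly bounded and closed''; you supply the missing estimate (each $t_i$ lies strictly between $-\alpha_1$ and $0$, hence $x_i^2+y_i^2 = t_i^2-\alpha_i^2 < \alpha_1^2$), and you also record the degenerate possibility $\mu^{-1}(0)=\emptyset$, which the paper passes over. For the non-compact case your route is genuinely different and more general: the paper exhibits one explicit divergent sequence of closed polygons in the single space $M^{2,2}(1,1,2,1)$, which verifies the phenomenon in an example but is not an argument covering all $k_1,k_2\ge 2$ and all $\alpha$, whereas you argue uniformly that the ${\rm SU}(1,1)$-invariant continuous function $f([u])=P\circ P$, with $P=\sum_{i\le k_1}u_i$ the future diagonal, descends to the quotient and is made unbounded by boosting two future edges in opposite directions, which is incompatible with compactness. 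What your approach buys is generality and a cleaner conclusion (no need to check that a particular sequence has no limit point in the quotient topology); what it costs is exactly the closing lemma you flag: that for $k_2\ge 2$ every norm in $\bigl[\sum_{i>k_1}\alpha_i,\infty\bigr)$ is realized by a sum of past edges of the prescribed lengths, in any prescribed past time-like direction. That lemma is true and fillable by the same computation you already did: $\lVert v_1(s)+v_2(s)\rVert^2=(\beta_1+\beta_2)^2+4\beta_1\beta_2\sinh^2 s$ sweeps $\bigl[(\beta_1+\beta_2)^2,\infty\bigr)$ by the intermediate value theorem, one aligns the remaining $k_2-2$ past edges along the resulting vector, and then one moves the whole past configuration onto the direction of $-P(s)$ using transitivity of ${\rm SU}(1,1)$ on pseudospheres; the reverse triangle inequality shows no smaller norm can occur, so the threshold is sharp. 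With that paragraph written out, your argument is complete and in fact proves the general statement, which the paper's outline, strictly speaking, only illustrates.
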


We give a brief outline of an argument for Theorem \ref{thF}.

Let us first assume that $k_2=1$. The last side of the polygon can be 
represented (after being acted on by an element of ${\rm SU}(1,1)$) by a vector in 
$\R^{2,1}$ with coordinates $(0,0,-\alpha_n)$. Hence, the sum of the first $n-1$ future 
time-like sides of the polygon is  $(0,0,\alpha_n)$. The only symmetry 
left is the circle rotation around the $t$-axis. Therefore, this space
of polygons is clearly bounded 
and closed and therefore compact.

The space $M^{k_1,k_2}(\alpha)$ is non-compact if $k_2\,>\,1$. For 
example, let us consider the simple case where $k_1=k_2=2$ and 
$\alpha_1=\alpha_2=\alpha_4=1$. Again we can assume that the last side is $(0,0,-1)$ and the only symmetry left is the circle rotation around the $t$-axis. Let $x_n$ be the closed polygon with sides
\begin{align*}
u_1& =(-1,0,\sqrt{2}), \quad u_2=(1-P(n),Q(n),n-\sqrt{2}), \\ u_3 &=(P(n),-Q(n),1-n)\quad \text{and} \quad u_4=(0,0,-1),
\end{align*}
where 
$$
P(n)=\frac{1}{2}(3+2(\sqrt{2}-1)n) \quad \text{and} \quad Q(n)=\sqrt{8(\sqrt{2}-1)n^2-4(3\sqrt{2}-1)n-9} \, .
$$
The sequence $\{x_n\}$ has no limit point in $M^{2,2}(1,1,2,1)$ and so this 
space is not compact.

Let us now describe the symplectic structure on $M^{k_1,k_2}(\alpha)$. For that, 
define the \emph{Minkowski cross product} $\dot{\times}$ as
$$
v\dot{\times} w\, :=\, \det \left( \begin{array}{rrr} -e_1 & -e_2 & e_3 \\ v_1 & 
v_2 & v_3 \\ w_1 & w_2 & w_3 \end{array}\right),
$$
where $v=(v_1,v_2,v_3)$ and $w=(w_1,w_2,w_3)$ with $e_1,e_2,e_3$
being the standard 
unit vectors in $\R^3$. This cross product satisfies the usual properties:
\begin{align*}
& v \dot{\times} w \,=\, - w \dot{\times} v\\
 & (u\dot{\times} v) \dot{\times} w + (v \dot{\times} w)\dot{\times} u + (w 
\dot{\times} u)\dot{\times} v\,=\, 0 
\end{align*}
and so $(\R^3,\dot{\times})$ is a Lie algebra. Moreover, it is isomorphic to 
$\mathfrak{su}(1,1)$ via the map
$$
\left(\begin{array}{l} x \\ y \\ t \end{array} \right)\,\longmapsto\,\frac{1}{2} 
\left( \begin{array}{cc} -\sqrt{-1} t & x + \sqrt{-1} y \\ x - \sqrt{-1} y & 
\sqrt{-1} t \end{array} \right)\, .
$$
Under this identification, the Minkowski inner product $\circ$ corresponds to $(A,B)\, 
\longmapsto\, -2 \cdot \mathrm{trace}(AB)$. 

The symplectic form on the pseudosphere $S_R$ is then given by
$$
\omega_u(v,w)\,=\,\frac{1}{R^2}\,u \circ (v\dot{\times} w)\, ,
$$
where $u\,\in\, S_R$ and $v,w\,\in \,T_u S_R$ (we think of $T_u S_R$ as the 
linear 
subspace of $\R^{2,1}$ orthogonal to $u$ with respect to the Minkowski inner product), and the map 
in \eqref{eq:mmhyp} is the moment map for the diagonal ${\rm SU}(1,1)$-action and 
the product symplectic structure.

\section{Back to the involution}
\label{sec:Mink}

Let us go back to Section~\ref{sec:inv} and consider a point $[p,q]$ in some $Z_S$. Let $u_i\in \R^3$ be the vector
$$
u_i \,:=\, \frac{\sqrt{-1}}{2} (p^*_ip_i -q_i q^*_i)_0 +\frac{1}{2} (p_i^*q_i^* + q_i p_i)_0\, ,
$$
where we use the identifications, $\mathfrak{su}(2)^*\cong( \R^3)^* 
\cong \mathfrak{su}(1,1)$.
If $i \in S$ then $p_i=\left( \begin{array}{ll}0 & b_i\end{array}\right)$ and $q_i=\left( \begin{array}{l} c_i \\ 0 \end{array}\right)$, implying that
\begin{equation}
\label{eq:uS}
u_i\,=\,\left( \operatorname{Re}{(b_i c_i)}\, , \operatorname{Im}{(b_i 
c_i)}\, ,\frac{\lvert b_i\rvert^2+\lvert c_i \rvert^2}{2}\right)
\end{equation}
with
$$
 u_i\circ u_i \,=\, (\lvert c_i\rvert^2-\lvert b_i \rvert^2)^2/4= \alpha_i^2
$$
($u_i$ has Minkowski norm $\alpha_i$).
Similarly, if $i \in S^c$, we have $p_i=\left( \begin{array}{ll}a_i & 0 \end{array}\right)$ and $q_i=\left( \begin{array}{l} 0 \\ d_i \end{array}\right)$, yielding
\begin{equation}
\label{eq:uSc}
u_i=\left( \operatorname{Re}{(a_i d_i)},- \operatorname{Im}{(a_i d_i)},-\frac{\lvert a_i\rvert^2+\lvert d_i \rvert^2}{2}\right)
\end{equation}
and
$$
u_i\circ u_i \,=\, (\lvert a_i\rvert^2-\lvert d_i \rvert^2)^2/4\,=\,
\alpha_i^2\, .
$$
Moreover, by \eqref{eq:Z0nn} and \eqref{eq:zero} we have that
$$
\sum_{i=1}^n u_i\,=\,0\, .
$$
So the vectors $u_i$ form a closed polygon in Minkowski $3$-space with the first 
$\lvert S\rvert$ sides in the positive time-like cone and the last $n-\lvert 
S\rvert$ sides in the past, with the $i$-th side being of Minkowski length $\alpha_i$.

\begin{theorem}\label{thm:polMink}
For any $S\,\in\, \mathcal{S}^\prime(\alpha)$, the components $\mathcal{Z}_S$ 
and $Z_S$, of the fixed-point sets of the involutions in \eqref{eq:invPHB} and 
\eqref{eq:inv} respectively, are diffeomorphic to the moduli space 
$$
M^{\lvert S\rvert,\lvert S^c \rvert}(\alpha)
$$
of closed polygons in Minkowski $3$-space.
\end{theorem}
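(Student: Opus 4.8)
The plan is to make the map $[p,q]\mapsto(u_1,\cdots,u_n)$ constructed just before the statement into the desired diffeomorphism. The preceding computation already shows that this map sends a point $[p,q]\in Z_S$ to a closed polygon with the correct edge data: the vectors $u_i$ for $i\in S$ lie on the future pseudosphere $S_{\alpha_i}^+$ (by \eqref{eq:uS}, since the third coordinate $(\lvert b_i\rvert^2+\lvert c_i\rvert^2)/2$ is positive and the Minkowski norm is $\alpha_i$), the $u_i$ for $i\in S^c$ lie on the past pseudosphere $S_{\alpha_i}^-$ (by \eqref{eq:uSc}), and the closure condition $\sum_i u_i=0$ holds. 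Thus $(u_1,\cdots,u_n)\in\mu^{-1}(0)$ for the moment map in \eqref{eq:mmhyp}, with $k_1=\lvert S\rvert$ and $k_2=\lvert S^c\rvert$. First I would check that this assignment is well-defined on equivalence classes, i.e. that it descends from $\mu_{HK}^{-1}((0,\alpha),(0,0))$ to the quotient $X(\alpha)$: one must verify that acting by $[A;e_1,\cdots,e_n]\in K$ on $(p,q)$ changes each $u_i$ by the coadjoint action of the image of $A$ in ${\rm SU}(1,1)$, so that the resulting polygon changes only by an overall ${\rm SU}(1,1)$-rotation and hence gives the same point of $M^{\lvert S\rvert,\lvert S^c\rvert}(\alpha)$. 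Here the $U(1)^n$-factors $e_i$ act trivially on $u_i$ (they cancel in the bilinear expressions $p_i^*p_i$, $q_iq_i^*$, $q_ip_i$, $p_i^*q_i^*$), and the $SU(2)\cong SU(1,1)$ factor acts by conjugation, matching the coadjoint action under the identification $\mathfrak{su}(2)^*\cong\mathfrak{su}(1,1)$ fixed in the text.

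Next I would construct the inverse and thereby establish bijectivity. Given a closed polygon $(u_1,\cdots,u_n)\in\mu^{-1}(0)$, the formulas \eqref{eq:uS} and \eqref{eq:uSc} must be inverted to recover $(b_i,c_i)$ for $i\in S$ and $(a_i,d_i)$ for $i\in S^c$. From the third coordinate and the norm condition $u_i\circ u_i=\alpha_i^2$ one solves for $\lvert b_i\rvert^2,\lvert c_i\rvert^2$ (resp.\ $\lvert a_i\rvert^2,\lvert d_i\rvert^2$) uniquely, and the first two coordinates then fix the phase of the product $b_ic_i$ (resp.\ $a_id_i$); the residual phase ambiguity is precisely absorbed by the $U(1)^n$-action, so the data $(p,q)$ are determined up to the $K$-action. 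One then checks that the reconstructed $(p,q)$ automatically satisfies the defining equations \eqref{eq:Z}, \eqref{eq:Z0nn}, \eqref{eq:zero} and so lies in $Z_S$; in particular the block-diagonal form \eqref{eq:Z0} is exactly the shape that places the polygon edges in the prescribed future/past cones. This produces a two-sided inverse, showing the map is a bijection.

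To upgrade bijectivity to a diffeomorphism I would observe that both sides are smooth symplectic quotients of the stated dimension $2(n-3)$: $Z_S$ has dimension $2(n-3)$ by Theorem~\ref{thm:invfixedpoints}, and $M^{\lvert S\rvert,\lvert S^c\rvert}(\alpha)$ has dimension $2(n-3)$ for generic $\alpha$ as recorded in Section~\ref{sec:Mink}. The assignment $u_i=\frac{\sqrt{-1}}{2}(p_i^*p_i-q_iq_i^*)_0+\frac12(p_i^*q_i^*+q_ip_i)_0$ is manifestly smooth (it is polynomial in the entries of $p,q$ and their conjugates), and since the $K$- and ${\rm SU}(1,1)$-actions are free and proper on the relevant level sets for generic $\alpha$, the induced map on quotients is a smooth bijection between manifolds of the same dimension; I would conclude it is a diffeomorphism either by exhibiting the smooth inverse just described or by checking that the differential is an isomorphism using the local coordinates \eqref{eq:coord1}. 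The identical reasoning transported through the isomorphism $\mathcal I$ of \eqref{eq:isom} gives the statement for $\mathcal Z_S$, since $\mathcal Z_S=\mathcal I(Z_S)$ and $\mathcal I$ is a diffeomorphism.

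The main obstacle I anticipate is the well-definedness and inverse-construction step, specifically verifying that the full $K=(SU(2)\times U(1)^n)/(\mathbb Z/2\mathbb Z)$-action on $(p,q)$ corresponds exactly to the diagonal ${\rm SU}(1,1)$-action on the polygon (with the $U(1)^n$ redundancy matching the phase ambiguity in inverting \eqref{eq:uS}--\eqref{eq:uSc}). One must be careful that the $SU(2)$-action on $\mathfrak{su}(2)^*$, after the identification $\mathfrak{su}(2)^*\cong\mathbb R^3\cong\mathfrak{su}(1,1)$, really intertwines with the coadjoint ${\rm SU}(1,1)$-action preserving the Minkowski inner product rather than the Euclidean one — this hinges on the particular identification in Section~\ref{sec:Mink} sending $(x,y,t)$ to $\frac12\bigl(\begin{smallmatrix}-\sqrt{-1}t & x+\sqrt{-1}y\\ x-\sqrt{-1}y & \sqrt{-1}t\end{smallmatrix}\bigr)$, under which the inner product becomes $-2\,\mathrm{trace}(AB)$. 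Once this correspondence of group actions is pinned down, the rest is bookkeeping.
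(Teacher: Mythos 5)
Your overall plan (use the map $[p,q]\mapsto(u_1,\cdots,u_n)$, check well-definedness, build an explicit inverse, transport to $\mathcal Z_S$ via $\mathcal I$) is the same as the paper's, but the step you yourself identify as the crux is resolved incorrectly, and this is a genuine gap. There is no correspondence between the full $K$-action and the diagonal ${\rm SU}(1,1)$-action: ${\rm SU}(2)$ is compact and ${\rm SU}(1,1)$ is not, so ``$SU(2)\cong SU(1,1)$'' is false and there is no ``image of $A$ in ${\rm SU}(1,1)$''. Concretely, the adjoint action of ${\rm SU}(2)$ on $\R^3\cong\mathfrak{su}(2)^*$ preserves the \emph{Euclidean} norm (it is ${\rm SO}(3)$), whereas the coadjoint ${\rm SU}(1,1)$-action preserves the \emph{Minkowski} norm (it is ${\rm SO}(2,1)$); these linear actions agree only on the rotations about the $t$-axis. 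Worse, conjugation by a general element of ${\rm SU}(2)$ does not even preserve the real form $\mathfrak{su}(1,1)\subset\mathfrak{sl}(2,\C)$ in which the $u_i$ must lie, because it destroys the block shape \eqref{eq:Z0}; so the formulas \eqref{eq:uS}, \eqref{eq:uSc} only define Minkowski vectors on block-form representatives, not $K$-equivariantly on the whole level set as your framing assumes. What actually survives is the stabilizer of the decomposition defining $Z_S$: arbitrary $e_i\in{\rm U}(1)$ (which, as you correctly note, act trivially on the $u_i$) together with \emph{diagonal} $A=\mathrm{diag}(e^{\sqrt{-1}\,\theta_0},e^{-\sqrt{-1}\,\theta_0})$, which moves the polygon by the Euclidean rotation $A_{-2\theta_0}\in{\rm SU}(1,1)$. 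This residual-torus-to-rotations matching is exactly the paper's well-definedness check; the boosts of ${\rm SU}(1,1)$ have no counterpart in $K$ whatsoever.

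The same error breaks your inverse construction: the reconstructed $(p,q)$ does \emph{not} ``automatically'' satisfy \eqref{eq:zero}. For $i\in S$ the $(x,y)$-coordinates of $u_i$ are $(\operatorname{Re}(b_ic_i),\operatorname{Im}(b_ic_i))$, so \eqref{eq:zero} says precisely that $\sum_{i\in S}u_i$ lies on the $t$-axis; a general point of $\mu^{-1}(0)$ fails this, since closure only gives $\sum_{i\in S}b_ic_i=-\overline{\sum_{i\in S^c}a_id_i}$, not the vanishing of each sum. This is why the paper, in its surjectivity argument, first applies an ${\rm SU}(1,1)$ element (a boost) to place the $(k_1+1)$-st vertex on the $t$-axis before writing down a preimage, and why its injectivity argument must show that an ${\rm SU}(1,1)$ element relating two polygons in the image cannot involve a boost --- which follows exactly because both polygons satisfy the $t$-axis normalization forced by \eqref{eq:zero}. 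So the correct dictionary is not ``$K$-action $\leftrightarrow$ ${\rm SU}(1,1)$-action'' but ``residual torus $\leftrightarrow$ rotations, with boosts absorbed by gauge-fixing the polygon''; without this your map is not shown to be well defined, injective, or surjective. (A smaller issue: a smooth bijection between manifolds of equal dimension need not be a diffeomorphism, so your dimension-count fallback does not suffice on its own; but your alternative of exhibiting the smooth inverse, once repaired as above, does, and is what the paper uses.)
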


\begin{proof} Let $S$ be a short set of cardinality at least two. 
Consider the map $\varphi:Z_S \longrightarrow M^{\lvert S\rvert,\lvert S^c 
\rvert}(\alpha)$ defined above, that is, $\varphi([p,q]_\R)$ is the element of 
$M^{\lvert S\rvert,\lvert S^c \rvert}(\alpha)$ represented by the polygon whose
sides are the vectors $u_i$ given by \eqref{eq:uS} and \eqref{eq:uSc} 
for $i$ in $S$ and $S^c$ respectively. 

Note that the pseudo-unitary group ${\rm SU}(1,1)$ is generated by the following
orientation preserving isometries of the pseudosphere: $A_\theta$ 
and $T_\phi$, where
$$
A_\theta =\left(\begin{array}{ccc} \cos{\theta} & -\sin{\theta} & 0 \\ \sin{\theta} & \cos{\theta} & 0 \\ 0 &0 & 1\end{array} \right),
$$
is an Euclidean \emph{rotation} by an angle $\theta$ in the $(x,y)$-plane, and
$$
T_\phi =\left(\begin{array}{ccc} 1 & 0 & 0 \\ 0 & \cosh{\phi} & \sinh{\phi} \\ 0 & \sinh{\phi} & \cosh{\phi} \end{array} \right)
$$
is a \emph{boost} of rapidity $\phi$ along the $y$-direction\footnote{In special relativity, the rapidity parameter $\phi$ is defined by $\tanh{\phi}=v/c$,
where $v$ is the velocity.} (cf. \cite{BV} for the details).

Let us first see that $\varphi$ is well defined. For that, consider two 
representatives $(p,q)$ and $(p^\prime,q^\prime)$ of the same element $[p,q]_\R$ in 
$Z_S$. Then there exists $[A;e_1,\cdots,e_n]\,\in\, K$ such that
$$
e_i^{-1}p_i A= p_i ^\prime \quad \text{and} \quad A^{-1} q_i e_i =q_i^{\prime}, 
\quad i=1,\cdots,n.
$$
Since $p_i=\left( \begin{array}{ll} a_i & b_i\end{array}\right)$, $p_i^\prime=\left(\begin{array}{ll}a_i^\prime & b_i^\prime\end{array}\right)$ with $a_i=a_i^\prime=0$ for $i \in S$ and $b_i=b_i^\prime=0$ for $i \in S^c$, while 
$q_i=\left( \begin{array}{ll} c_i & d_i \end{array}\right)^t$, $q_i^\prime=\left( \begin{array}{ll} c_i^\prime & d_i^\prime \end{array}\right)^t$
with $d_i\,=\,d_i^\prime\,=\,0$ for $i \in S$ and $c_i\,=\,c_i^\prime=0$ for $i 
\,\in\, S^c$, we conclude that
$$
A= \left(\begin{array}{cc}\alpha & 0 \\ 0 & \overline{\alpha} \end{array}\right)
$$
with $\alpha=e^{\sqrt{-1}\,\theta_0} \in S^1$. Then we have
$$
\left(\begin{array}{l} \operatorname{Re}{(b^\prime_i c^\prime_i)} \\ \\ 
\operatorname{Im}{(b^\prime_i c^\prime_i)} \\ \\ \frac{\lvert b^\prime_i\rvert^2+\lvert c^\prime_i \rvert^2}{2} \end{array}\right) = A_{-2\theta_0} \left(\begin{array}{l}\operatorname{Re}{(b_i c_i)} \\ \\ \operatorname{Im}{(b_i c_i)} \\ \\ \frac{\lvert b_i\rvert^2+\lvert c_i \rvert^2}{2} \end{array}\right)
\,\,\text{for $i \in S$,}
$$
and
$$
\left(\begin{array}{r} \operatorname{Re}{(a^\prime_i d^\prime_i)} \\ \\ - \operatorname{Im}{(a^\prime_i d^\prime_i)} \\ \\ - \frac{\lvert a^\prime_i \rvert^2+ \lvert d^\prime_i \rvert^2}{2} \end{array}\right) = A_{-2\theta_o} \left( 
\begin{array}{r} \operatorname{Re}{(a_i d_i)} \\ \\ - \operatorname{Im}{(a_i d_i)} \\ \\ - \frac{\lvert a_i\rvert^2+\lvert d_i \rvert^2}{2} \end{array} \right)\,\, \text{for $i \in S^c$},
$$
where $A_{-2\theta_0}$ is a rotation in ${\rm SU}(1,1)$. Therefore, it follows
that $\varphi$ is well-defined.

To show that $\varphi$ is injective, let us consider two points $[p,q]_\R, 
[p^\prime,q^\prime]_\R \,\in\, Z_S$ with
$\varphi([p,q]_\R)\,=\,\varphi([p^\prime,q^\prime]_\R) $. Then, writing 
$$ p_i=\left( \begin{array}{ll}0 & b_i\end{array}\right), \,\, p_i^\prime=\left( \begin{array}{ll}0 & b_i^\prime \end{array}\right)\quad \text{and} \quad q_i=\left( \begin{array}{l} c_i \\ 0 \end{array}\right),\,\, q_i^\prime=\left( \begin{array}{l} c_i^\prime \\ 0 \end{array}\right), \,\, \text{for $i\in S$} 
$$
with $\sum_{i\in S} b_i c_i=\sum_{i\in S} b^\prime_i c^\prime_i=0$ (cf. \eqref{eq:zero}),
and 
$$ p_i=\left( \begin{array}{ll} a_i & 0 \end{array}\right), \,\, p_i^\prime=\left( \begin{array}{ll} a_i^\prime & 0 \end{array}\right)\quad \text{and} \quad q_i=\left( \begin{array}{l} 0 \\ d_i \end{array}\right),\,\, q_i^\prime=\left( \begin{array}{l} 0 \\ d_i^\prime \end{array}\right), \,\, \text{for $i\in S^c$}, 
$$
with $\sum_{i\in S^c} a_i d_i=\sum_{i\in S^c} a^\prime_i d^\prime_i=0$, there exists an Euclidean rotation $A_{\theta_0}$ by an angle $\theta_0$ on the $(x,y)$-plane
such that
\begin{equation}\label{eq:prime1}
\left(\begin{array}{l} \operatorname{Re}{(b^\prime_i c^\prime_i)} \\ \\ \operatorname{Im}{(b^\prime_i c^\prime_i)} \\ \\ \frac{\lvert b^\prime_i\rvert^2+\lvert c^\prime_i \rvert^2}{2} \end{array}\right) = A_{\theta_0} \left(\begin{array}{l}\operatorname{Re}{(b_i c_i)} \\ \\ \operatorname{Im}{(b_i c_i)} \\ \\ \frac{\lvert b_i\rvert^2+\lvert c_i \rvert^2}{2} \end{array}\right)
\,\,\text{for $i \in S$,}
\end{equation}
and
\begin{equation}\label{eq:prime2}
\left(\begin{array}{r} \operatorname{Re}{(a^\prime_i d^\prime_i)} \\ \\ - \operatorname{Im}{(a^\prime_i d^\prime_i)} \\ \\ - \frac{\lvert a^\prime_i \rvert^2+ \lvert d^\prime_i \rvert^2}{2} \end{array}\right) = A_{\theta_0} \left( 
\begin{array}{r} \operatorname{Re}{(a_i d_i)} \\ \\ - \operatorname{Im}{(a_i d_i)} \\ \\ - \frac{\lvert a_i\rvert^2+\lvert d_i \rvert^2}{2} \end{array} \right)\,\, \text{for $i \in S^c$}.
\end{equation}
Indeed, if the two vectors on the left-hand side of \eqref{eq:prime1} and 
\eqref{eq:prime2} were not obtained from the corresponding vectors on the 
right-hand side by an Euclidean rotation, but by an element of ${\rm SU}(1,1)$ 
involving a boost, they would fail to satisfy the condition
$$
\sum_{i\in S} b_i^\prime c_i^\prime \,=\, \sum_{i\in S^c} a_i^\prime d_i^\prime 
\,=\,0\, . 
$$
We conclude that 
$$
b_i^\prime c_i^\prime \,= \,e^{\sqrt{-1}\, \theta_0} b_i c_i, \quad \text{and} \quad 
\lvert b_i^\prime \rvert^2 + \lvert c_i^\prime \rvert^2 = \lvert b_i \rvert^2 + \lvert c_i\rvert^2, \quad \text{for $i\in S$},
$$
while
$$
a_i^\prime d_i^\prime \,=\, e^{-\sqrt{-1}\, \theta_0} a_i d_i, \quad \text{and} \quad 
\lvert a_i^\prime \rvert^2 + \lvert d_i^\prime \rvert^2 = \lvert a_i \rvert^2 + \lvert d_i\rvert^2, \quad \text{for $i\in S^c$},
$$
and so 
$$
p_i^\prime \,= \,p_i A \quad \text{and} \quad q_i^\prime = A^{-1}q_i, \quad 
i\,=\,1,\cdots,n
$$
with $A\,= \left( \begin{array}{cc} e^{-\sqrt{-1}\, \theta_0/2 } & 0 \\ 0 & e^{\sqrt{-1}\, 
\theta_0/2 } \end{array}\right)$, implying that $[p,q]_\R=[p^\prime,q^\prime]_\R$.

Let us now see that $\varphi$ is surjective. For that, take any element 
$[v]\in M^{\lvert S\rvert,\lvert S^c \rvert}(\alpha)$. Using the ${\rm 
SU}(1,1)$--action, the $(k_1+1)$-th vertex can be placed
on the $t$-axis (so that $\sum_{i=1}^{k_1} v_i$ is a vector along the $t$-axis). Therefore,
 we may assume that $[v]$ is represented by a polygon with the first $\lvert S\rvert$ sides being
$(x_i,y_i,t_i)$ with $t_i>0$, on the positive time-like cone and the
last $n-\lvert S\rvert$ sides being $(x_i,y_i,-t_i)$ with $t_i>0$, in the past, satisfying the additional conditions
$$
\sum_{i=1}^{k_1} x_i=\sum_{i=1}^{k_1} y_i =\sum_{i=k_1+1}^n x_i = \sum_{i=k_1+1}^n y_i=0.
$$
Then $[v]$ is the image of the hyperpolygon $[p,q]_\R$, where
$$
p_i= \left( \begin{array}{ll}0 & \frac{1}{l_i}(x_i+\sqrt{-1}\, y_i)\end{array}\right), \quad q_i= \left(\begin{array}{l} l_i \\ 0 \end{array}\right) \,\, \text{for $i\in S$},
$$
and 
$$ 
p_i= \left( \begin{array}{ll} \frac{1}{l_i} (x_i-\sqrt{-1} y_i) & 0 \end{array}\right), \quad q_i=\left( \begin{array}{c} 0 \\ l_i \end{array}\right)\,\, \text{for $i\in S^c$},
$$
with 
$$
l_i \,=\, \sqrt{\alpha_i +\sqrt{\alpha^2+\lvert x_i +\sqrt{-1}\, y_i \rvert^2}} 
\,=\, \sqrt{\alpha_i+t},\quad i\,=\,1,\cdots, n\, .
$$
Here $[p,q]_\R\in Z_S$ since 
$$
\sum_{i\in S} b_i c_i \,=\,\sum_{i=1}^{k_1} (x_i + \sqrt{-1}\, y_i )\,= \,0,
\quad  \sum_{i \in S^c} a_i d_i= \sum_{i=k_1+1}^n ( x_i - \sqrt{-1}\, y_i )\, ,
$$
and
$$
\lvert c_i \rvert^2 - \lvert b_i\rvert^2 =2 \alpha_i \,\,\text{for all $i \in S$} \quad \text{while} \quad \lvert d_i\rvert^2 - \lvert a_i\rvert^2 =2 \alpha_i \,\,\text{for all $i \in S^c$} ,
$$
where as usual we write $p_i= \left( \begin{array}{ll} a_i& b_i 
\end{array}\right)$ and $q_i= \left( \begin{array}{ll} c_i& d_i \end{array}\right)^t$, for $i=1,\cdots,n$. 

Note that clearly $\varphi$ and its inverse are differentiable and the theorem 
follows.
\end{proof}

\begin{rem}
{\rm Note that when $\lvert S\rvert=n-1$ we obtain that the space $M^{n-1,1}(\alpha)$, 
which we already knew is compact, is, in fact, diffeomorphic to $\C\P^{n-3}$.}
\end{rem}

Theorem~\ref{thm:polMink} allows us draw several conclusions on the polygon 
spaces in Minkowski $3$-space which are immediate consequences of Theorem 
\ref{thm:defret} and Theorem \ref{thm:Poinc}.

\begin{theorem}
Let $M^{k_1,k_2}(\alpha)$ be the moduli space of closed polygons in Minkowski 
$3$-space that have the first $k_1$ sides in the future time-like cone and the 
last $k_2$ in the past, such that the Minkowski length of the $i$-th side is 
$\alpha_i$. Assume without loss of generality that $\sum_{i=1}^{k_1} \alpha_i\, < 
\,\sum_{i=k_1+1}^n \alpha_i$. Then,
\begin{enumerate}
\item[(i)] $M^{k_1,k_2}(\alpha)$ admits a deformation retraction to $\C \P^{k_1-2}$, and

\item[(ii)] the Poincar\'{e} polynomial of $M^{k_1,k_2}(\alpha)$ is
$$
P_t(M^{k_1,k_2}(\alpha))\,=\,P_t(\C \P^{k_1-2})=1+t+\cdots+t^{2(\lvert S\rvert 
-2)}\, .
$$
\end{enumerate}
\end{theorem}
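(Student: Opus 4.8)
The plan is to deduce both parts purely formally from the diffeomorphism of Theorem~\ref{thm:polMink} together with the two structural facts about $Z_S$ that are already in hand, namely Theorem~\ref{thm:defret} and Theorem~\ref{thm:Poinc}. The content has been front-loaded into those results, so what remains is essentially bookkeeping: the one genuine input specific to this statement is matching the future/past splitting of a Minkowski polygon with a short subset of $\{1,\dots,n\}$.

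First I would set $S\,:=\,\{1,\dots,k_1\}$, so that $S^c=\{k_1+1,\dots,n\}$ indexes the $k_2$ past sides. The hypothesis $\sum_{i=1}^{k_1}\alpha_i<\sum_{i=k_1+1}^n\alpha_i$ reads exactly as $\varepsilon_S(\alpha)=\sum_{i\in S}\alpha_i-\sum_{i\in S^c}\alpha_i<0$, i.e. $S$ is short in the sense of \eqref{eq:short}. Since we are in the generic regime in which $M^{k_1,k_2}(\alpha)$ is a smooth manifold, and since the target of the statement is $\C\P^{k_1-2}$ (so that $k_1\geq 2$), we get $\lvert S\rvert=k_1\geq 2$ and hence $S\in\mathcal S'(\alpha)$. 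Theorem~\ref{thm:polMink} then supplies a diffeomorphism $\varphi\colon Z_S\to M^{\lvert S\rvert,\lvert S^c\rvert}(\alpha)=M^{k_1,k_2}(\alpha)$.

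Next I would transport the geometry of $Z_S$ across $\varphi$. By Theorem~\ref{thm:defret} the submanifold $X_S\cong\C\P^{\lvert S\rvert-2}=\C\P^{k_1-2}$ is a deformation retract of $Z_S$; composing that deformation retraction with the diffeomorphism $\varphi$ yields a deformation retraction of $M^{k_1,k_2}(\alpha)$ onto $\varphi(X_S)\cong\C\P^{k_1-2}$, which is part (i). For part (ii), the Poincar\'e polynomial is a homotopy invariant, so $\varphi$ together with the retraction give $P_t(M^{k_1,k_2}(\alpha))=P_t(Z_S)=P_t(\C\P^{k_1-2})$; invoking Theorem~\ref{thm:Poinc} identifies this with $1+t+\cdots+t^{2(k_1-2)}$, as claimed.

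I do not expect a serious obstacle, since the proof is a stringing-together of results already established. The one point requiring care is the identification of the correct short set under the ``without loss of generality'' reduction: one should confirm that the involution $(x,y,t)\mapsto(-x,-y,-t)$ of $\R^{2,1}$, which produces the symplectomorphism $M^{k_1,k_2}(\alpha)\cong M^{k_2,k_1}(\cdots)$, is exactly what licenses assuming the future block to be the short one, and that the degenerate boundary case $k_1=1$ (where $\C\P^{k_1-2}$ is empty and $S$ falls outside $\mathcal S'(\alpha)$) is excluded by the implicit hypothesis $k_1\geq 2$ built into the target $\C\P^{k_1-2}$.
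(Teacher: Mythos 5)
Your proposal is correct and takes essentially the same route as the paper, which presents this theorem as an immediate consequence of Theorems~\ref{thm:defret} and \ref{thm:Poinc} transported across the diffeomorphism of Theorem~\ref{thm:polMink}. Your explicit identification of $S=\{1,\dots,k_1\}$ as the short set (via the WLOG hypothesis and the reflection $(x,y,t)\mapsto(-x,-y,-t)$) and the transport of the retraction onto $\varphi(X_S)\cong\C\P^{k_1-2}$ simply spell out the bookkeeping the paper leaves implicit.
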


\section{An Example}\label{sec:ex}

As an example, we consider the case where $n=4$. Let $\mathcal{H}(\beta)$ be 
the moduli space of parabolic Higgs bundles $(E,\Phi)$ of rank two over $\C \P^1$ with 
four parabolic points, where the underlying holomorphic vector bundle is trivial 
and $\beta$ is generic. Let
$$
\alpha_i\,:=\,\beta_2(x_i)-\beta_1(x_i), \quad i=1,\cdots,4\, .
$$
Since a subset of $\{1,2,3,4\}$ is either short or long, we know that there are 
exactly three short sets of cardinality two for any value of 
$\alpha=(\alpha_1,\cdots,\alpha_4)$. Let us denote these sets by $S_1$, $S_2$ and 
$S_3$. Then the fixed point set of the involution in \eqref{eq:invPHB} has 
exactly $4$ connected components
$$
\mathcal{M}_{\beta,2,0}, \mathcal{Z}_{S_1}, \mathcal{Z}_{S_2}, \mathcal{Z}_{S_3} \quad \text{or} \quad \mathcal{Z}_{S_1}, \mathcal{Z}_{S_2}, \mathcal{Z}_{S_3}, \mathcal{Z}_{\widetilde{S}},
$$
according to whether $\mathcal{M}_{\beta,2,0}$ is empty or nonempty, where 
$\widetilde{S}$ is a short set of cardinality $3$ which we know exists exactly 
when $\mathcal{M}_{\beta,2,0}\,=\,\emptyset$ \cite{GM,BY}. 

If $\mathcal{M}_{\beta,2,0}\,\neq\, \emptyset$, then this space 
$\mathcal{M}_{\beta,2,0}$ is a compact 
toric 
manifold of dimension two, therefore diffeomorphic to $\C \P^1$. Indeed, let us 
assume without loss of generality that $\alpha_1\neq \alpha_2$ (note that 
$\alpha$ is generic) and consider the diagonal $d_2:=u_1+u_2$ connecting the 
origin to the third vertex of the polygon. For each intermediate value of the 
length of $d_2$, we have a circle of possible classes of polygons obtained by 
rotating the first two sides of the polygon around the diagonal, while fixing 
the other two. The minimum and maximum values of this length are
$$
\max \left\{\lvert \alpha_1-\alpha_2\rvert\, , \lvert \alpha_3- \alpha_4\rvert 
\right\} \quad \text{and} \quad \min\left\{ \alpha_1+\alpha_2\, , 
\alpha_3+\alpha_4 \right\}
$$
respectively, in which cases we only have one possible polygon. Note that this 
length is the 
moment map for the bending flow obtained by rotating the first two sides of the polygon around the diagonal.

If $\mathcal{M}_{\beta,2,0}\,=\,\emptyset$ then, since $\lvert \widetilde{S} 
\rvert =3$, we have that 
$\mathcal{Z}_{\widetilde{S}}=\mathcal{M}_{\widetilde{S}}$ is a connected 
component of $\mathcal{H}(\beta)^{\Z/2\Z}$ diffeomorphic to $\C \P^1$.

Let us now consider $\mathcal{Z}_{S_i}$. By Theorem~\ref{thm:polMink} we know 
that this space is diffeomorphic to $M^{2,2}(\alpha)$ formed by classes of 
closed polygons in Minkowski $3$-space with the first two sides $u_1,u_2$ in 
the future time-like cone and the last two, namely $u_3$ and $u_4$, in the past, where each 
side $u_i$ has Minkowski length $\alpha_i$. Let us again consider the diagonal
$d_2=u_1+u_2$ connecting the origin to the third vertex of the polygon. This 
vector is also a future time-like vector and we can consider its Minkowski 
length $\ell$. Note that if we place the first vertex at the origin and use the 
${\rm SU}(1,1)$--action to place the third vertex on the $t$-axis, then the 
bending flow can be described as a rotation of the vectors $u_1,u_2$ around the 
$t$-axis with a constant angular speed while fixing the other two vectors. Hence, 
$\mathcal{Z}_{S_i}$ is a non-compact toric manifold with moment map $\ell$. By 
the reversed triangle inequality we have that $\ell$ has the minimum value
$$\max \left\{ \alpha_1+\alpha_2\, ,\alpha_3+\alpha_4\right\}$$ 
which is attained at just one point (the polygon with two sides aligned along the $t$-axis) and has no other critical value. We conclude that $\mathcal{Z}_{S_i}$ is diffeomorphic to $\C$. 

In all cases we conclude that $\mathcal{H}(\beta)^{\Z/2\Z}$ has one compact 
connected component diffeomorphic to $\C \P^1$ and three non-compact components diffeomorphic to $\C$.


\end{document}